\newtheorem{theorem}{Theorem}[section]
\newtheorem{lemma}[theorem]{Lemma}
\newtheorem{proposition}[theorem]{Proposition}
\newtheorem{corollary}[theorem]{Corollary}
\theoremstyle{definition}
\newtheorem*{definition}%[theorem]
{Definition}
\newtheorem{remark}[theorem]{Remark}
\newtheorem*{Index Convention}{Index Convention}
\newtheorem*{notation}{Notation}
\def\keywords#1{\par\medskip
\noindent\textbf{Keywords.} #1}
\def\subjclass#1{{\renewcommand{\thefootnote}{}%
\footnote{\emph{Mathematics Subject Classification (2010):} #1}}}
\begin{document}
\let\le=\leqslant
\let\ge=\geqslant
\let\leq=\leqslant
\let\geq=\geqslant
\newcommand{\e}{\varepsilon }
\newcommand{\f}{\varphi }
\newcommand{ \g}{\gamma}
\newcommand{\F}{{\Bbb F}}
\newcommand{\N}{{\Bbb N}}
\newcommand{\Z}{{\Bbb Z}}
\newcommand{\Q}{{\Bbb Q}}
\newcommand{\C}{{\Bbb C}}
\newcommand{\R}{\Rightarrow }
\newcommand{\W}{\Omega }
\newcommand{\w}{\omega }
\newcommand{\s}{\sigma }
\newcommand{\hs}{\hskip0.2ex }
\newcommand{\ep}{\makebox[1em]{}\nobreak\hfill $\square$\vskip2ex }
\newcommand{\Lr}{\Leftrightarrow }

\title{Nilpotency of Lie type algebras  with metacyclic Frobenius groups of automorphisms}

\markright{}

\author{{\sc N.\,Yu.~Makarenko}\\ \small Sobolev Institute of Mathematics, Novosibirsk, 630\,090,
Russia }

\author{
{N.\,Yu.~Makarenko\footnote{The work is supported by  Russian
Science Foundation, project 21-11-00286 https://rscf.ru/project/21-11-00286/}}\\
\small  Sobolev Institute of Mathematics, Novosibirsk, 630\,090,
Russia
\\[-1ex] \small  natalia\_makarenko@yahoo.fr}
\date{}
\maketitle

\subjclass{Primary 17A36, 17B70; Secondary 17B75; 17B40; 17B30;
17A32}

\begin{abstract} An algebra $L$ over a field $\Bbb
K$, in which product is denoted by $[\,,\,]$, is called a \textit{
Lie type algebra}  if for all elements $a,b,c\in L$ there exist
$\alpha, \beta\in \Bbb K$  (depending on $a,b,c$) such that
$\alpha\neq 0$ and $[[a,b],c]=\alpha [a,[b,c]]+\beta[[a,c],b]$.
 Suppose that a Lie type algebra $L$  admits a Frobenius group of automorphisms $FH$ with cyclic
kernel $F$ of order $n$ and complement $H$ such that the
fixed-point subalgebra of $F$ is trivial and the fixed-point
subalgebra of $H$ is nilpotent of class $c$. If either the ground
field $\Bbb K$ contains a primitive $n$th  root of unity, or
$\alpha, \beta $ are constant in $[[a,b],c]=\alpha
[a,[b,c]]+\beta[[a,c],b]$ (i.e. do not depend on the choice of
$a,b,c$), then $L$ is nilpotent and the nilpotency class of $L$ is
bounded in terms of $|H|$ and $c$. The result extends the known
theorem of Khukhro, Makarenko and Shumyatsky on Lie algebras with
metacyclic Frobenius group of automorphisms.
\end{abstract}

\keywords{Lie type algebras, Frobenius group, automorphism,
graded, solvable, nilpotent}

\section{Introduction}

\vskip1ex
  A \textit{Lie type algebra} is by definition an algebra $L$ over a field
$\Bbb F$ with product $[\,,\,]$ satisfying the following property:
for all elements $a, b, c \in L$ there exist $\alpha, \beta \in
\Bbb F$ such that $\alpha\ne 0$ and
\begin{equation}\label{tozh}
[[a,b],c]=\alpha [a,[b,c]]+\beta[[a,c],b].\end{equation} Note that
in general $\alpha, \beta$ depend on elements $a,b,c\in L$;  they
can be viewed as functions $\alpha, \beta: L\times L \times L
\rightarrow \Bbb F$.

\vskip1ex If a group $G$ acts on an algebra $L$, we denote by
$$C_L(G)=\{l\in L \,\,\,\mid \,\,\, l^{\varphi}=l \mathrm{\,\,\,
for\, all\,\, } \varphi\in G\}$$  the fixed-point subalgebra of
$G$.

\vskip1ex
 By  a theorem of Khukhro, Makarenko and  Shumyatsky~\cite{khu-ma-shu} if  a Lie algebra over a field admits  a Frobenius group of automorphisms $FH$ with cyclic
kernel $F$ and complement $H$ such that the fixed-point subalgebra
of $F$ is trivial and the fixed-point subalgebra of $H$ is
nilpotent of class $c$, then $L$ is nilpotent  and the nilpotency
class of $L$ is bounded in terms of $|H|$ and $c$.

\vskip1ex The aim of the present paper is to extent this result to
the class of Lie type algebras, which includes in particular Lie
algebras, associative algebras and Leibnitz algebras.

 \vskip1ex
 \begin{theorem}\label{th-1}
 Suppose that a Lie type  algebra $L$ \/$($of  possibly infinite dimension\/$)$ over an arbitrary field $\Bbb K$ admits a Frobenius group of automorphisms $FH$ with cyclic
kernel $F$ of order $n$ and complement $H$ of order $q$ such that
the fixed-point subalgebra
 of $H$ is nilpotent of class $c$ and the fixed-point subalgebra
of $F$ is trivial. Assume also that  either $\Bbb K$ contains a
primitive $n$th root of unity, or $\alpha, \beta$ in property
(\ref{tozh}) are constants and do not depend on the choice of
$a,b,c$. Then $L$ is nilpotent and the nilpotency class of $L$ is
bounded in terms of $q$ and $c$.
\end{theorem}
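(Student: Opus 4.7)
\bigskip

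\noindent\textbf{Proof proposal.} The plan is to parallel the Khukhro--Makarenko--Shumyatsky argument~\cite{khu-ma-shu} for Lie algebras, with the main additional technical ingredient being a systematic use of the generalized Jacobi identity~(\ref{tozh}) to reduce arbitrary bracketings to left-normed ones. I would structure the argument in five steps.

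\smallskip

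\emph{Step 1: Reduction to a field containing a primitive $n$th root of unity.} If $\mathbb{K}$ already contains $\omega = \zeta_n$, nothing is needed. Otherwise, by hypothesis $\alpha, \beta$ are constants, and I would pass to the scalar extension $\tilde{L}=L\otimes_{\mathbb K}\mathbb K(\omega)$. Since the Lie type identity has \emph{constant} coefficients in this case, it survives scalar extension intact; the hypotheses $C_L(F)=0$ and $C_L(H)$ nilpotent of class $c$ also pass to $\tilde L$ because $F$ acts semisimply and $\gcd(|FH|,\operatorname{char}\mathbb K)$ can be handled in the usual manner (extending, if necessary, to the algebraic closure first, and then descending the nilpotency class bound).

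\smallskip

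\emph{Step 2: The $F$-grading and the $H$-action on it.} Fix a generator $\varphi$ of $F$. Then $L=\bigoplus_{i\in\mathbb Z/n}L_i$ with $L_i=\{x:x^{\varphi}=\omega^i x\}$, and since $\varphi$ is an algebra automorphism one has $[L_i,L_j]\subseteq L_{i+j}$. The assumption $C_L(F)=0$ yields $L_0=0$. The Frobenius condition makes $H$ permute the non-trivial grades in regular orbits of length $q$; for each orbit $\{i_1,\dots,i_q\}$ and $x\in L_{i_1}$, the element $T(x)=\sum_{h\in H}x^h$ lies in $C_L(H)$.

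\smallskip

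\emph{Step 3: Commutator rewriting.} Because $\alpha\neq 0$ in~(\ref{tozh}), one can solve
\[
[a,[b,c]]=\alpha^{-1}\bigl([[a,b],c]-\beta\,[[a,c],b]\bigr),
\]
so every bracketing of a word $a_1\cdots a_k$ is a $\mathbb K$-linear combination of left-normed commutators $[a_{\pi(1)},a_{\pi(2)},\dots,a_{\pi(k)}]$ with permutations of the entries. For homogeneous inputs $a_s\in L_{i_s}$ the grading is preserved summand by summand, so all terms of every such expansion live in $L_{i_1+\dots+i_k}$.

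\smallskip

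\emph{Step 4: Representing homogeneous commutators via $C_L(H)$.} I would adapt the key combinatorial device of~\cite{khu-ma-shu}: a long left-normed commutator $[a_1,\dots,a_N]$ in homogeneous entries can, via Step~3 and $H$-averaging along the orbits of Step~2, be rewritten as a $\mathbb K$-linear combination of commutators of the form $[T(b_1),\dots,T(b_m)]$ with $T(b_j)\in C_L(H)$, up to a controlled multiplicative constant. The point is that any sufficiently long sequence of non-zero grades contains, by a pigeonhole argument on $H$-orbits of grades, subsequences whose grade sums are forced to lie in a prescribed $H$-orbit, allowing replacement by $T$-images.

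\smallskip

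\emph{Step 5: Conclusion.} Since $C_L(H)$ is nilpotent of class $c$, any commutator $[T(b_1),\dots,T(b_m)]$ with $m>c$ vanishes. Combining with Step~4 this shows $[a_1,\dots,a_N]=0$ for all homogeneous $a_s$ once $N$ exceeds an explicit $(q,c)$-bound, and by multilinearity and Step~3 this bounds the nilpotency class of $L$ itself.

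\smallskip

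\emph{Main obstacle.} The delicate point is Step~4 in the case where $\alpha,\beta$ in~(\ref{tozh}) are non-constant functions of the arguments. In~\cite{khu-ma-shu} the usual Jacobi identity is applied many times, and the resulting coefficients are universal integers; here each rewriting introduces new element-dependent scalars $\alpha,\beta$, so one must verify that the combinatorial scheme only uses the identity as a ``rewriting rule'' (always producing a linear combination of commutators in the same grade), never relying on any specific numerical cancellation beyond $\alpha\neq 0$. Provided this bookkeeping goes through, the $(q,c)$-bound on the nilpotency class of $L$ follows exactly as in the Lie algebra case.
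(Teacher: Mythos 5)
Your Steps 1--3 match the paper's setup (scalar extension, eigenspace grading $L=\bigoplus L_i$ with $L_0=0$, left-normalization via $\alpha\neq 0$). But Step 4 is where a genuine gap opens up, and as stated it would fail. It is \emph{not} the case that a long homogeneous left-normed product $[a_1,\dots,a_N]$ can be rewritten (by $H$-averaging plus pigeonhole on orbits) as a linear combination of products $[T(b_1),\dots,T(b_m)]$ of $C_L(H)$-elements. What the averaging trick actually yields is much weaker: expanding $[T(x_{a_1}),\dots,T(x_{a_{c+1}})]=0$ and collecting the terms landing in the fixed graded component $L_{a_1+\dots+a_{c+1}}$ shows that $[x_{a_1},\dots,x_{a_{c+1}}]=0$ \emph{only} when no other $H$-twisted index tuple $(r^{\alpha_1}a_1,\dots,r^{\alpha_{c+1}}a_{c+1})$ has the same sum --- i.e., when the sequence $(a_1,\dots,a_{c+1})$ is what the paper calls $r$-independent. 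When the sequence is $r$-dependent the product need not vanish, and this is precisely where the real work begins. Your pigeonhole sketch gives no mechanism to handle these surviving $r$-dependent products.

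The paper's route is substantially different and more elaborate. It extracts from the averaging argument a ``selective $c$-nilpotency condition'' (products along $r$-independent index tuples of length $c+1$ vanish), then proves solvability of $(c,q)$-bounded derived length first --- using an analogue of the Shalev--Kreknin theorem for $(\mathbb Z/n\mathbb Z)$-graded Lie type algebras with few non-trivial components (Proposition \ref{small-comp}), a quantitative bound on the set $D(a_1,\dots,a_k)$ of indices extending an $r$-independent sequence to an $r$-dependent one (Lemma \ref{115}), and a fine analysis of the ideal generated by a homogeneous product $[L_{d_1},\dots,L_{d_c}]$ along an $r$-independent tuple (Lemmas \ref{odin}, \ref{dva}, Proposition \ref{malocomp}). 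Only after solvability is established does the proof of nilpotency proceed, by induction on derived length: the metabelian case is handled directly via Lemmas \ref{l_b-metab} and \ref{rigid}, and the general case is lifted using the P.~Hall-type Lemma \ref{chao}. Your proposal omits all of these ingredients --- $r$-(in)dependence, the Shalev--Kreknin analogue, the intermediate solvability reduction, and the Hall-type lifting --- so as written it is not a proof; it is a correct statement of the framing and of the rewriting/bookkeeping concern about non-constant $\alpha,\beta$ (which you identify accurately), but the core combinatorial argument is missing.
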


\vskip1ex

 The proof of the theorem  is heavily based on the arguments of~\cite{khu-ma-shu}. Since the demonstrations in~\cite{khu-ma-shu}
do not use structure theory and all calculations are founded on
combinatorial reasoning in a graded Lie algebra, most of the
lemmas  can be easily adapted  to a graded Lie
 type algebra  by  replacing the Jacobi identity by  the propery~(\ref{tozh}). We give them without proof. However, unlike Lie
algebras, Lie type algebras lack the anticommutativity identity,
therefore some works are needed to overcome this difficulty.

\vskip1ex

We now state a straightforward corollary of Theorem \ref{th-1}.

\vskip1ex
 A \textit{\/$($right\/$)$ Leibniz
algebra} or  \textit{Loday algebra} is an algebra $L$ over a field
 satisfying the Leibniz
identity $$[[a,b],c] = [a,[b,c]]+ [[a,c],b]$$ for all $a,b,c\in
L$.

\vskip1ex

\begin{corollary}\label{th1}
 Suppose that a Leibniz   algebra $L$ \/$($of  possibly infinite dimension\/$)$ over an arbitrary field $\Bbb K$ admits a Frobenius group of automorphisms $FH$ with cyclic
kernel $F$ and complement $H$ of order $q$ such that the
fixed-point subalgebra
 of $H$ is nilpotent of class $c$ and the fixed-point subalgebra
of $F$ is trivial.  Then $L$ is nilpotent and the nilpotency class
of $L$ is bounded in terms of $q$ and $c$.
\end{corollary}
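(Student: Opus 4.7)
\bigskip
\noindent\textbf{Proof proposal.} The plan is to deduce the corollary directly from Theorem~\ref{th-1} by observing that the Leibniz identity is precisely the special case of~(\ref{tozh}) in which the scalars $\alpha$ and $\beta$ are constant and equal to $1$.

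First I would verify that every Leibniz algebra $L$ is a Lie type algebra in the sense of the introduction. Substituting $\alpha=\beta=1$ into~(\ref{tozh}) gives $[[a,b],c]=[a,[b,c]]+[[a,c],b]$, which is exactly the defining Leibniz identity. Moreover, this choice of $\alpha,\beta$ is independent of the elements $a,b,c\in L$, so the second alternative hypothesis of Theorem~\ref{th-1} is automatically satisfied; in particular, no condition on the ground field $\Bbb K$ (such as the presence of a primitive $n$th root of unity, where $n=|F|$) is required, which is consistent with the fact that the field $\Bbb K$ in the corollary is arbitrary.

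Next, setting $n=|F|$, the remaining hypotheses of Theorem~\ref{th-1} on the Frobenius group of automorphisms $FH$ carry over verbatim from those of the corollary: $F$ is cyclic of order $n$, $C_L(F)=0$, $|H|=q$, and $C_L(H)$ is nilpotent of class $c$. Applying Theorem~\ref{th-1} then yields that $L$ is nilpotent with nilpotency class bounded in terms of $q$ and $c$, which is the desired conclusion. The result is labelled a ``straightforward corollary'' in the text, and indeed there is no substantive obstacle: all of the real work lies in Theorem~\ref{th-1} itself, while the only point to check here is the purely formal observation that the Leibniz identity fits the Lie type framework with constant coefficients $\alpha=\beta=1$.
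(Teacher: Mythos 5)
Your proposal is correct and is precisely the intended argument: a Leibniz algebra satisfies (\ref{tozh}) with the constant choice $\alpha=\beta=1$ (and $\alpha=1\neq 0$), so the second alternative hypothesis of Theorem~\ref{th-1} holds over any field, and the conclusion follows immediately. The paper offers no separate proof, labelling it a straightforward corollary, and your reasoning matches what the paper implicitly relies on.
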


\vskip1ex  Some preliminary definitions and facts are given in
\S\,2. In \S\,3 we prove some auxiliary result --- an analogue for
 Lie type algebras of Shalev--Kreknin theorem \cite{kr, shalev}
on graded Lie algebras with small number of non-trivial components
(Proposition \ref{small-comp}). This result is used in \S\,5 to
prove the solvability of bounded derived length of $L$. But before
that, we perform the reduction to graded algebras in \S\,4.
Finally in \S\,6 we employ the induction on derived length to
prove the nilpotency of bounded class of $L$.

\section{Preliminaries}

Let $L$ be a  Lie type algebra. If $M,\, N$ are subspaces of
   $L$, then $[M,N]$
denotes the subspace, generated  by all the products $[m,n]$ for
$m\in M$, $n\in N$. In view of (\ref{tozh}),  if $M$ and $N$ are
two-side ideals, then $[M,N]+[N,M]$ is also a two-side ideal; if
$H$ is a (sub)algebra, then $[H,H]$ is  two-side ideal of $H$ and,
in particular, its subalgebra.
 The  subalgebra  generated by  subspaces~$U_1,U_2,\ldots, U_k$ is
denoted by $\left<U_1,U_2,\ldots, U_k\right>$, and the two-side
ideal generated by~$U_1,U_2,\ldots, U_k$ is denoted by ${}_{\rm
id}\!\left<U_1, U_2,\ldots, U_k\right>$.

\vskip1ex
 A simple product  $[a_1,a_2,a_3,\ldots, a_s]$ is by
definition the left-normalized product
$$[...[[a_1,a_2],a_3],\ldots, a_s].$$ The analogous notation is also
used for subspaces
$$[A_1,A_2,A_3,\ldots, A_s]=[...[[A_1,A_2],A_3],\ldots, A_s].$$

\vskip1ex Since
\begin{equation}\label{tozh2} [a,[b,c]]=\frac{1}{\alpha}\,[[a,b],c]-\frac{\beta}{\alpha}\,[[a,c],b]\end{equation}
for all $a,b,c\in L$,
 any (complex) product in  elements in $L$  can be expressed as a linear combination of simple products of
the same length in the same elements.  Also, it follows that the
(two-sided) ideal in  $L$ generated by a  subspace $S$ is the
subspace generated by all the  simple products
$[x_{i_1},y_j,x_{i_2},\ldots, x_{i_t}]$ and $[y_j,x_{i_1},
x_{i_2},\ldots x_{i_t}]$, where $t\in \Bbb N$ and $x_{i_k}\in L,
y_j\in S$. In particular, if $L$ is generated by a  subspace $M$,
then  its space is spanned  by simple products in elements of~$M$.

 \vskip1ex
 The derived series of an algebra $L$ is defined as
$$L^{(0)}=L, \; \; \; \, \, \, \, \, \,
L^{(i+1)}=[L^{(i)},L^{(i)}].$$ Then $L$ is solvable of derived
length at most $n$ if $L^{(n)}=0$.

 \vskip1ex

Terms of the lower central series of $L$ are defined as $\gamma
_1(L)=L;$ \ $\gamma _{k+1}(L)=[\gamma _k(L),\,L]$. Then $L$ is
nilpotent of class at most $c$ if $\gamma _{c+1}(L)=0$.

\vskip1ex

 We will need the following algebra analog of P.~Hall's theorem
\cite{hall}, which will helps us in proving nilpotency of a
solvable Lie type algebra.

\vskip1ex
\begin{lemma}\label{chao} Let $K$ be an ideal of a Lie
type algebra $L$. If $\gamma _{c+1}(L)\subseteq [K,K]$ and $\gamma
_{k+1}(K)=0$ then $$\gamma _{c{k+1 \choose 2} -{k \choose
2}+1}(L)=0.$$
\end{lemma}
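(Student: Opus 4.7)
The plan is to adapt P.~Hall's classical Lie-algebra nilpotency criterion, arguing by induction on $k$. For the base case $k=1$, the hypothesis $\gamma_{k+1}(K)=0$ reduces to $[K,K]=0$; combined with $\gamma_{c+1}(L)\subseteq[K,K]$ this forces $\gamma_{c+1}(L)=0$, matching the claimed bound $c\binom{2}{2}-\binom{1}{2}+1=c+1$.

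For the inductive step, set $J:=\gamma_k(K)$. A preliminary secondary induction on $j$ (using (\ref{tozh})) shows that each $\gamma_j(K)$ is a two-sided ideal of $L$: for $x\in\gamma_{j-1}(K)$, $y\in K$, $l\in L$, the products $[[x,y],l]$ and $[l,[x,y]]$ each decompose via (\ref{tozh2}) into simple products lying in $\gamma_j(K)$, by the inductive hypothesis combined with $[K,L], [L,K]\subseteq K$. The hypothesis $\gamma_{k+1}(K)=0$ then gives $[J,K]=[K,J]=0$. Passing to $\bar L:=L/J$ with $\bar K:=K/J$ nilpotent of class $k-1$ and $\gamma_{c+1}(\bar L)\subseteq[\bar K,\bar K]$, the inductive hypothesis applied to $\bar L$ yields $\gamma_{M}(L)\subseteq J$, where $M:=c\binom{k}{2}-\binom{k-1}{2}+1$. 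The remaining task is to promote this to $\gamma_{N}(L)=0$ with $N=M+k(c-1)+1=c\binom{k+1}{2}-\binom{k}{2}+1$.

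For this final step, every spanning simple product $[x,l_1,\ldots,l_{N-M}]$ of $\gamma_N(L)$ with $x\in\gamma_M(L)\subseteq J$ is rewritten through repeated application of (\ref{tozh2}) into a linear combination of simple products in which some segment of $c+1$ consecutive $l_i$'s has been collected into a nested $\gamma_{c+1}(L)\subseteq K$-factor bracketed directly with the $J$-element $x$; such terms vanish because $[J,K]=0$. The combinatorial count confirming that precisely $N-M=k(c-1)+1$ extra bracketings suffice is the content of P.~Hall's original argument and transfers to the Lie-type setting with (\ref{tozh2}) taking the role of the Jacobi identity.

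The main obstacle is exactly this rewriting: in the Lie-algebra proof it is powered by the Jacobi identity and by anticommutativity, whereas here only (\ref{tozh2}) is available. Each exchange of brackets picks up nonzero scalars $\alpha,\beta$ depending on the triple of elements, but because the assertion concerns only subspace inclusions, their particular values are harmless. The real cost is the absence of anticommutativity: both $[x,y]$ and $[y,x]$ must be tracked in parallel at every step, so the verification that $\gamma_j(K)$ is a two-sided ideal and each bracket rearrangement in the commutator-collection step become roughly twice as involved as in the Lie case.
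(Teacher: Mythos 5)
Your overall plan — induction on $k$, passing from $L$ to $L/J$ with $J=\gamma_k(K)$ — is a legitimate way to organize a proof of P.~Hall's theorem, and the preliminary steps are sound: the base case $k=1$ is correct, the observation that each $\gamma_j(K)$ is a two-sided ideal of $L$ (argued via \eqref{tozh2} using that $K$ is a two-sided ideal, tracking $[x,y]$ and $[y,x]$ separately in the absence of anticommutativity) is right, as is $[J,K]=[K,J]=0$ and the arithmetic $N-M=k(c-1)+1$. The paper gives no details at all, merely citing Hall's original proof, so your level of detail already exceeds the paper's.

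The flaw is concentrated in the final step, and it is genuine. You propose to start from a simple product $[x,l_1,\ldots,l_{N-M}]$ with $x\in\gamma_M(L)\subseteq J$ and, by repeated use of \eqref{tozh2}, collect some block of $c+1$ consecutive $l_i$'s into a factor lying in $\gamma_{c+1}(L)\subseteq K$ so as to invoke $[J,K]=0$. This cannot work as stated. First, for $c=1$ one has $N-M=k(c-1)+1=1<c+1=2$, so there are simply not $c+1$ elements $l_i$ to collect; yet the conclusion $\gamma_{k+1}(L)=0$ is still true and must be proved. Second, and more fundamentally, your mechanism only uses $\gamma_{c+1}(L)\subseteq K$ together with $[J,K]=0$, never the full hypothesis $\gamma_{c+1}(L)\subseteq[K,K]$. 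That stronger inclusion is essential. Already for $c=1$, $k=2$, one needs to see that for $a,b\in K$, $l\in L$, the summand $[[a,l],b]$ arising from rewriting $[[a,b],l]$ vanishes because $[a,l]\in\gamma_2(L)\subseteq[K,K]=\gamma_2(K)$, so $[[a,l],b]\in\gamma_3(K)=0$; it does not vanish on the crude grounds $[a,l]\in K$. In general Hall's argument establishes a lower-central compatibility of the form $\gamma_{\sigma(i)}(L)\subseteq\gamma_i(K)$ (equivalently $[\gamma_i(K),\underbrace{L,\ldots,L}_{i(c-1)+1}]\subseteq\gamma_{i+1}(K)$), in which each bracketing with $L$ is fed back into $\gamma_{c+1}(L)\subseteq\gamma_2(K)$ and then into deeper terms of $\gamma_\bullet(K)$ via $[\gamma_a(K),\gamma_b(K)]\subseteq\gamma_{a+b}(K)$ — not merely annihilated against $J$. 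That finer inductive lemma, adapted to Lie type algebras via \eqref{tozh2} exactly as you do elsewhere, is the missing content; without it the final step as you describe it does not prove the claim.
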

 \begin{proof} The proof can be easily obtained from the proof of P.~Hall's theorem
 \cite{hall}  by appropriate simplifications in our ``linear'' case.
\end{proof}

 \vskip1ex

  An algebra $L$ over a field
is \textit{ $({\Bbb Z}/n{\Bbb Z})$-graded} if
$$L=\bigoplus_{i=0}^{n-1}L_i\qquad \text{ and }\qquad [L_i,L_j]\subseteq L_{i+j\,({\rm mod}\,n)},$$
where  $L_i$ are subspaces  of~$L$. Elements of $L_i$ are referred
to as \textit{homogeneous} and the subspaces $L_i$  are called
\textit{homogeneous components} or  \textit{grading components}.
In particular, $L_0$ is called the zero component.

 \vskip1ex

An additive subgroup $H$ of $L$ is called \textit{homogeneous} if
$H=\bigoplus_i (H\cap L_i)$ and we set $H_i=H\cap L_i$. Obviously,
any subalgebra or an ideal generated by homogeneous subspaces is
 homogeneous. A homogeneous subalgebra  can be regarded as a
$({\Bbb Z} /n{\Bbb Z} )$-graded algebra with the induced grading.
It follows that the terms of the derived series and of the lower
central  series of $L$, the ideals $L^{(k)}$ and  $\gamma
_{k}(L)$, are also $({\Bbb Z} /n{\Bbb Z} )$-graded algebras with
induced grading $$L^{(k)}_i=L^{(k)}\cap L_i,\,\,\,\,\,\gamma
_{k}(L)_i=\gamma _{k}(L)\cap L_i$$ and
$$L^{(k+1)}_i=\sum_{u+v\equiv i\,({\rm mod\,}n)
}[L^{(k)}_{u},\,L^{(k)}_{v}]$$ $$\gamma
_{k+1}(L)_i=\sum_{u+v\equiv i\,({\rm mod\,}n ) }\Big([\gamma
_{k}(L)_u,\,L_{v}]+[L_v,\,\gamma _{k}(L)_u]\Big)=\sum_{u+v\equiv
i\,({\rm mod\,}n ) }[\gamma _{k}(L)_u,\,L_{v}].$$ The last
equality in the above formula is due to \eqref{tozh2}.

\vskip1ex
\begin{Index Convention}
Henceforth a small Latin letter with an index $i\in \Bbb Z/n\Bbb
Z$ will denote a homogeneous element in the grading component
$L_i$, with the index only indicating which component this element
belongs to: $x_i\in L_i$. We will not be using numbering indices
for elements of the $L_i$, so that different elements can be
denoted by the same symbol when it only matters which component
the elements belong to. For example, $x_{i}$ and $x_{i}$ can be
different elements of $L_{i}$, so that $[x_{i},\, x_{i}]$ can be a
non-zero element of $L_{2i}$.
\end{Index Convention}

 \vskip1ex
We use abbreviation, say, ``$(m,n,\dots )$-bounded'' for ``bounded
above in terms of  $m, n,\dots$''.

\section
{$(\Z/n\Z)$-graded Lie type algebra with small number of
non-trivial homogeneous components}

By Kreknin's theorem \cite{kr} a {$(\Z/n\Z)$-graded Lie algebra
$$L=\bigoplus_{i=0}^{n-1}L_i,$$ where
$[L_i,L_j]\subseteq L_{i+j\,({\rm mod}\,n)}$ and $L_0=0$, is
solvable of derived length at most $2^n-2$.

\vskip1ex In \cite{shalev}, in the frame of studying the finite
groups of bounded rank with automorphisms, Shalev noticed an
interesting fact: if  among $L_i$
  there are only $d\leq n-1$ non-trivial
components, then the derived length does not depend on $n$, but
only on $d$. We will need an analog of this result for Lie type
algebras.
%We extend this result to graded Lie type algebras.
%The reader could follow the line of Kreknin's demonstration given
%in Khukhro's book \cite{kh-book}  and  adapt it to the case of Lie
%type algebras, but we decided to give this proof for the
%convenience of the reader.

\begin{proposition}\label{small-comp} Let $$L=\bigoplus_{i=0}^{n-1}L_i,$$  be a $(\Z/n\Z)$-graded Lie type algebra.
If $L_0=0$ and among $L_i$ there are only $d\leq n-1$ non-trivial
components, then $L$ is solvable of $d$-bounded derived length.
\end{proposition}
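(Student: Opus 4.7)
My plan is to prove Proposition~\ref{small-comp} by induction on $d$, paralleling Shalev's refinement~\cite{shalev} of Kreknin's theorem for $(\Z/n\Z)$-graded Lie algebras and adapting its combinatorial arguments to the Lie type setting via \eqref{tozh2}. The essential tool, already noted in \S2, is that \eqref{tozh2} allows any nested bracket to be rewritten as a linear combination of left-normed (simple) products of the same length and the same grading; this enables index-sum manipulations in the style of Kreknin despite the absence of anticommutativity.

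The base case $d=1$ is immediate: if $L=L_i$ with $i\neq 0$, then $2i\not\equiv i\pmod n$, so $L_{2i}=0$ (only $L_i$ is non-trivial) and hence $[L,L]\subseteq L_{2i}=0$. For the inductive step I would track the non-trivial index sets $I^{(k)}=\{i:L^{(k)}_i\neq 0\}$ along the derived series. Since $L^{(k+1)}_i$ is spanned by $[L^{(k)}_u,L^{(k)}_v]+[L^{(k)}_v,L^{(k)}_u]$ with $u+v\equiv i\pmod n$, we have
\[
I^{(k+1)}\subseteq I^{(k)}\cap\bigl(I^{(k)}+I^{(k)}\bigr).
\]
Either this chain strictly shrinks after some $d$-bounded number of steps — in which case the inductive hypothesis applied to $L^{(k)}$, a graded Lie type algebra with fewer non-trivial components, concludes — or it stabilises. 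In the stable case one invokes Kreknin-type identities applied to left-normed products $[x_{i_1},\ldots,x_{i_s}]$ whose partial index sums eventually leave $I\cup\{0\}$, forcing the product to vanish and so producing the needed descent after a $d$-bounded delay.

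The main obstacle is the lack of anticommutativity. In a Lie algebra, $[L_i,L_j]$ and $[L_j,L_i]$ coincide up to sign, so one tracks a single orientation per unordered pair; in a Lie type algebra both orientations contribute independently to $L^{(1)}_{i+j}$, roughly doubling the combinatorics of nested brackets. I would overcome this by systematically reducing every product to left-normed form via \eqref{tozh2} (with the scalars $\alpha,\beta$ treated as formal coefficients, allowed here to depend on the triple), after which Shalev's argument — phrased purely in terms of the arithmetic of grading indices in $\Z/n\Z$ — transfers essentially verbatim, tracking both orientations in parallel. The extra scalars $\alpha,\beta$ only perturb coefficients and do not disturb the vanishing conclusions, so the derived-length bound stays a function of $d$ alone.
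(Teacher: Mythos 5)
Your proposal has a genuine gap, and the problematic step is exactly where the real content of the proposition lives. You split the argument into the case where the chain of index sets $I^{(k)}=\{i : L^{(k)}_i\neq 0\}$ strictly shrinks and the case where it stabilises, and in the stable case you say one ``invokes Kreknin-type identities applied to left-normed products $[x_{i_1},\ldots,x_{i_s}]$ whose partial index sums eventually leave $I\cup\{0\}$, forcing the product to vanish.'' That last claim is false as stated: a long left-normed product can have all partial index sums stay inside $I$. For example, with $n=3$ and $I=\{1,2\}$, the index sequence $1,1,2,1,2,1,2,\dots$ has partial sums $1,2,1,2,1,2,\dots$, never leaving $I$, and this can be prolonged indefinitely. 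Nothing about the arithmetic of indices alone forces a left-normed product of a graded algebra with $L_0=0$ to vanish; what forces vanishing in Kreknin's theorem is the interplay between the index arithmetic and the \emph{derived-series} structure of the factors, and that interaction is not present in your ``stable case'' argument. Since the monotone chain $I^{(0)}\supseteq I^{(1)}\supseteq\dots$ must eventually stabilise, and you have not ruled out that it stabilises at a non-empty set, your induction on $d$ does not close. In effect, the one sentence ``invoke Kreknin-type identities'' is being asked to carry the entire proof of solvability, which is the thing to be proved.

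The paper's proof does not make a case distinction and is not an outer induction on $d$. It orders the non-trivial indices $0<w_1<\dots<w_d<n$ and proves, by a nested induction on $k$, the two inclusions $L^{(2^{k-1})}\cap L_{w_k}\subseteq\langle L_{w_{k+1}},\dots,L_{w_d}\rangle$ and $L^{(2^k-1)}\subseteq\langle L_{w_{k+1}},\dots,L_{w_d}\rangle$. The mechanism that makes this work is precisely what your argument is missing: one takes a factor $y_j\in L^{(2^{k-1}-1)}$, uses the inner induction hypothesis to write it in $\langle L_{w_k},\dots,L_{w_d}\rangle$, expands via \eqref{tozh2} into left-normed products $[x_i,v_{h_1},\dots,v_{h_t}]$ with all $h_l\geq w_k$, and then applies the elementary arithmetic lemma (if $i+j\equiv k\pmod n$ with $1\le i,j\le n-1$, then $i,j$ are both $>k$ or both $<k$) to show each such product either vanishes (the partial sum hits $0$) or lands in $\langle L_{w_{k+1}},\dots,L_{w_d}\rangle$. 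Setting $k=d$ gives $L^{(2^d-1)}=0$. Note that the derived-series depth $2^{k-1}-1$ enters essentially — it is what justifies rewriting the factor $y_j$ — whereas your sketch has no such mechanism. If you want to salvage your approach, the fix is to replace the shrink/stabilise dichotomy by this Kreknin-style double induction, which unconditionally eliminates the smallest remaining non-trivial index at each doubling of the derived-series step.
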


\begin{proof} Let $\Omega =\{ w_1,\ldots ,w_ {d}\}$ be the set of
all indices $i$ such that $L_i\ne 0$. We  assume that
$0<w_1<w_2<\ldots<w_{d-1}<w_d<n$. We use the same arguments as in
the proof of Kreknin's theorem  given in  \cite[Theorem
4.3.1]{kh-book} replacing everywhere $i$ by $w_i$ and the Jacobi
identity  by (\ref{tozh}). The assertion follows from the two
following inclusions
\begin{equation}\label{f-small-comp-1} L^{(2^{k-1})}\cap L_{w_k}
\subseteq \langle L_{w_{k+1}}, L_{w_{k+2}},\ldots,
L_{w_{d}}\rangle \end{equation}
\begin{equation}\label{f-small-comp-2}
L^{(2^{k}-1)}\subseteq \langle L_{w_{k+1}}, L_{w_{k+2}},\ldots,
L_{w_{d}}\rangle,\end{equation} which are proved simultaneously by
induction on $k$.

\vskip1ex We will also need the following elementary Lemma.

\begin{lemma} [{\cite[Lemma 4.3.5]{kh-book}}]\label{ntl} If $i + j \equiv k (\mathrm{mod}\, n)$ for $1 \leq i, j \leq n - 1$, then the numbers $i$ and $j$ are both greater
than $k$ or less than $k$.
\end{lemma}

For  $k=1$ the inclusions (\ref{f-small-comp-1}) and
(\ref{f-small-comp-2}) take the forms
$$L^{(1)}\cap L_{w_1}\subseteq \langle L_{w_2}, \ldots,
L_{w_{d}}\rangle,$$
$$L^{(1)}\subseteq \langle L_{w_2}, \ldots,
L_{w_{d}}\rangle.$$ The subspace $L^{(1)}\cap L_{w_1}$ is
generated by non-trivial products $[x_i, y_j]$ such that $x_i\in
L_i, \,\,y_j\in L_j$ with $i,j\in \Omega$ and $i+j\equiv w_1\,
(\mathrm{mod}\, n)$. By Lemma \ref{ntl} either $i,j>w_1$ or
$i,j<w_1$. Since there are no non-trivial components $L_i$ with
$i<w_1$, it follows that $i, j\in \{w_2, \ldots,w_{d}\}$.  This
implies (\ref{f-small-comp-1}) and also (\ref{f-small-comp-2}) for
$k=1$.

\vskip1ex Let's now $k>1$. We prove the  inclusion
(\ref{f-small-comp-1}) using the induction hypothesis for the
inclusion (\ref{f-small-comp-2}). The subspace $L^{2^{k-1}}\cap
L_{w_k}$ is generated by the products $[x_i,y_j]$, such that
$x_i\in L_i\cap L^{2^{k-1}-1}$, $y_j\in L_j\cap L^{2^{k-1}-1}$
with $i,j\in \Omega$ and $i+j\equiv w_k\, (\mathrm{mod}\, n)$.  By
induction hypothesis for (\ref{f-small-comp-2}), $y_j\in \langle
L_{w_{k}}, L_{w_{k+1}},\ldots, L_{w_{d}}\rangle$, and hence $y_j$
can be written as a linear combination of products of the form
$$[u_{j_1}, u_{j_2}, \ldots, u_{j_t}]$$  with $j_l\in \{w_k, \ldots, w_d\}$,
$j_1+j_2+\cdots+ j_t\equiv j\,(\mathrm{mod}\, n)$. Applying
repeatedly (\ref{tozh2}), we can represent
$$\big[x_i,[u_{j_1}, u_{j_2}, \ldots, u_{j_t}]\big]$$ as a linear
combination of products
\begin{equation}\label{f-small-comp-3} [x_i, v_{h_1},\ldots, v_{h_t}],\end{equation} where $h_l\in
\{w_k, \ldots, w_d\}$, $h_1+h_2+\cdots +h_t \equiv
j\,(\mathrm{mod}\, n)$. For each such product we have
$$i+h_1+h_2+\cdots +h_t\equiv w_k\,(\mathrm{mod}\, n).$$
If $h_t=w_k$, then $$i+h_1+h_2+\cdots +h_{t-1} \equiv
0\,(\mathrm{mod}\, n)$$ and hence (\ref{f-small-comp-3}) is equal
to zero. If $h_t>w_k$, then $i+h_1+h_2+\cdots +h_{t-1}>w_k$ by
Lemma \ref{ntl}, and consequently the product
(\ref{f-small-comp-3}) lies in $\langle L_{w_{k+1}},
L_{w_{k+1}},\ldots, L_{w_{d}}\rangle$. Note, that the case
$h_t<w_k$ is impossible since all the $h_l$ belong to the set
$\{w_k, \ldots, w_d\}$. As $[x_i,y_j]$ is a linear combination of
products of the form (\ref{f-small-comp-3}) it follows that
$[x_i,y_j]$ also belongs to this subalgebra as required.

\vskip1ex To prove (2) for $k>1$ we apply (2) for $k-1$  to the
subalgebra $L^{2^{k-1}}$:
$$L^{({2^{k}-1)}}=(L^{(2^{k-1})})^{(2^{k-1}-1)}\subseteq \langle (L^{2^{k-1}})\cap
L_{w_k},\dots, (L^{2^{k-1}})\cap L_{w_{d}}\rangle.$$
%\subseteq \langle (L^{2^{k-1}})\cap L_{w_k}, L_{w_{k+1}},\dots,
%L_{w_{d}}\rangle .$$
As we have already proved above, the subspace $(L^{2^{k-1}})\cap
L_{w_k}$ lies in $\langle L_{w_{k+1}},\dots, L_{w_{d}}\rangle .$
Hence
$$L^{(
2^{k}-1)}\subseteq \langle L_{w_{k+1}},\dots, L_{w_{d}}\rangle .$$

\end{proof}

\section{Reduction to graded algebras with ``selective nilpotency'' condition}

Let $L$ be a Lie type algebra that satisfies the hypothesis of
Theorem \ref{th1} and   let  $\varphi$ be a generator and $n$ the
order of the Frobenius kernel $F$. If the ground field $\Bbb K$
contains a primitive $n$th root $\omega$ of 1, we consider the
eigenspaces $L_i=\{x\in L \mid x^{\varphi}=\omega^i x\}$ for the
eigenvalues $\omega^i$. One can verify  that
$$[L_i, L_j]\subseteq L_{i+j\,(\rm{mod}\,n)}\qquad \text{and}\qquad L= \bigoplus _{i=0}^{n-1}L_i,$$
so this is a $(\Bbb Z /n\Bbb Z )$-grading. We also have
$L_0=C_L(F)=0$.

In the case, where $\Bbb K$ does not contain a primitive $n$th
root of 1, by hypothesis of the theorem \ref{th1}, the values
$\alpha, \beta$ in \eqref{tozh} should be constant and not
depending on $a,b,c$.  We extend the ground field by $\omega$ and
denote the resulting Lie algebra by $\widetilde L$. The group $FH$
acts in a natural way on $\widetilde L$ and this action inherits
the conditions that $C_{\widetilde L}(F)=0$ and $C_{\widetilde
L}(H)$ is nilpotent of class~$c$.  Since $\alpha,\beta $ are
constant for all $a,b,c\in L$ in (\ref{tozh}), this property
(\ref{tozh}) holds also in $\widetilde L$, and therefore
$\widetilde L$ is a Lie type algebra. Thus, we can assume that
$L=\widetilde L$ and the ground field contains~$\omega$.

\vskip1ex

A known property of Frobenius groups says that if the Frobenius
kernel is cyclic, then the Frobenius complement  is also cyclic.
Let $h$ be a generator and $q$ the order of $H$ and let
$\varphi^{h^{-1}} = \varphi^{r}$ for some $1\leq r \leq n-1$.
Since by definition of the Frobenius group $C_H(f) = 1$ for every
non-identity $f$ in $F$, it follows that the numbers $n, q, r$
satisfy the following condition

\begin{equation}  \label{prim}
\begin{split}
 & n, q, r \text{ are positive integers such that } 1\leq r \leq n-1 \text{ and } \\
&\quad\text{the image of } r \text{ in } {\Bbb Z}/d{\Bbb Z} \text{
is a primitive } q \text{th root of } 1 \\ &\qquad
\qquad\qquad\text{for every divisor } d \text{ of }n.
\end{split}
\end{equation}

The group $H$ permutes the components $L_i$: $L_i^h=L_{ri}$, since
if $x_i\in L_i$, then $(x_i^h)^{\varphi}=x_i^{h\varphi
h^{-1}h}=(x_i{\varphi^r})^h=\omega^{ir}x_i^h.$

\vskip1ex We can assume that the  characteristic $p$ of the ground
field does not divide $n=|F|$. In the opposite case,  we consider
the Hall $p'$-subgroup $\langle f_1\rangle$ of $F$, the Sylow
$p$-subgroup $\langle f_2\rangle$ of $F$ and $f_2$-invariant
subspace $C_L(f_1)$. If $C_L(f_1)$ is non-trivial, the
automorphism $f_2$ (as a $p$-automorphism acting on a space over a
field of characteristic $p$) has necessary a non-trivial fixed
point on $C_L(f_1)$ which would be also a non-trivial fixed point
for $F$ that contradicts our assumption. Thus $C_L(f_1)=0$ and we
can replace $F$ by $\langle f_1\rangle$, consider the Frobenius
group $\langle f_1\rangle H$ instead of $FH$ and, consequently,
assume that $p$ does not divide~$n$.

\vskip1ex In what follows, to simplify the notations, (under the
Index Convention) we will denote $(x_s)^{h^i}$ by $x_{r^is}$ for
$x_s\in L_s$ . Let $x_{a_1},\dots,x_{a_{c+1}}$ be homogeneous
elements in $L_{a_1},\dots,L_{a_{c+1}}$, respectively. Consider
the sums
\begin{align*}
X_1&=x_{a_1}+x_{ra_1}+\cdots+x_{r^{q-1}a_1},\\
\vdots&\\
X_{c+1}&=x_{a_{c+1}}+x_{ra_{c+1}}+\cdots+x_{r^{q-1}a_{c+1}}.
\end{align*}
Since all of them lie in subalgebra $C_L(H)$, which is nilpotent
of class $c$, it follows that
$$[X_1,\ldots, X_{c+1}]=0.$$
We expand the expressions to obtain on the left a linear
combination of products in the $x_{r^ja_i}$, which in particular
involves the term $[x_{a_1},\ldots, x_{a_{c+1}}]$. Suppose that
the product $[x_{a_1},\ldots, x_{a_{c+1}}]$ is non-zero. Then
there must be other terms in the expanded expression that belong
to the same component $L_{a_1+\cdots+a_{c+1}}$. In other words,
then
$$a_{1}+\dots+a_{c+1}=r^{\alpha_1}a_{1}+\dots+r^{\alpha_{c+1}}a_{c+1}$$
for some $\alpha_i\in\{0,1,2,\dots,q-1\}$ not all of which are
zeros.  Equivalently, if for all $(\alpha_1,
\alpha_2,\ldots,\alpha_{c+1})\neq (0,0,\ldots 0)$ with
$\alpha_i\in\{0,1,2,\dots,q-1\}$,
$$a_{1}+\dots+a_{c+1}\neq
r^{\alpha_1}a_{1}+\dots+r^{\alpha_{c+1}}a_{c+1},$$ then the
product $[x_{a_1},\ldots x_{a_{c+1}}]$ is equal to zero.

\vskip1ex  The above considerations lead to the following notion
that plays an important role in further arguments.

\begin{definition}
Let $a_1,\dots,a_k$ be not necessarily distinct non-zero elements
of $\Bbb Z/n\Bbb Z$. We say that the sequence $(a_1,\dots,a_k)$ is
\textit{$r$-dependent} if
$$a_{1}+\dots+a_{k}=r^{\alpha_1}a_{1}+\dots+r^{\alpha_k}a_{k}$$
for some $\alpha_i \in\{0,1,2,\dots,q-1\}$ not all of which are
zero. If the sequence $(a_1,\dots,a_k)$ is not $r$-dependent, i.e.
if for all $(\alpha_1, \alpha_2,\ldots,\alpha_{k})\neq (0,0,\ldots
0)$ with $\alpha_i\in\{0,1,2,\dots,q-1\}$,
$$a_{1}+\dots+a_{k}\neq
r^{\alpha_1}a_{1}+\dots+r^{\alpha_{k}}a_{k},$$ we call it
\textit{$r$-independent}.
\end{definition}

\begin{remark}\label{remark1}
A single non-zero element $a\in \Bbb Z/n\Bbb Z$ is always
$r$-independent. Indeed,  if $a=r^{\alpha}a$ for $\alpha
\in\{1,2,\dots,q-1\}$, then $a=0$ by \eqref{prim}.
\end{remark}

\begin{definition}
 Let $n, q, r$ be integers defined by \eqref{prim}. We say that a $(\Z/n\Z)$-graded Lie type algebra $L$ satisfies
the \textit{selective $c$-nilpotency condition} if, under the
Index Convention,
\begin{equation}\label{select}
[x_{d_1},x_{d_2},\dots, x_{d_{c+1}}]=0\quad \text{ whenever }
(d_1,\dots,d_{c+1}) \text{ is $r$-independent}.
\end{equation}
 \end{definition}

\begin{remark}\label{remark2}
If $c=0$  a $(\Z/n\Z)$-graded Lie type algebra $L$ satisfies the
\textit{selective $c$-nilpotency condition} if and only if $L_d=0$
for all $d\ne0$, since any element $d\ne 0$ is $r$-independent by
Remark~\ref{remark1}.

\end{remark}

Summarizing all of the above we can assert that to demonstrate
Theorem \ref{th1}  it suffices to prove the nilpotency of
$(q,c)$-bounded class of a $(\Z/n\Z)$-graded Lie type algebra with
selective $c$-nilpotency condition.

\section{ Bounding of derived length }

In this section we suppose that $L$ is  a $(\Z/n\Z)$-graded Lie
type algebra with $L_0=0$ that satisfies   the selective
$c$-nilpotency condition (\ref{select}).  Note, that from
Proposition \ref{small-comp} it follows that $L$ is already
solvable of $n$-bounded derived length. We will obtain a bound of
the derived length that does not depend of $n$, but depends only
on $q=|H|$ and $c$.

\vskip1ex

We start with  an elementary fact from \cite{khu-ma-shu}  on
$r$-dependent sequences.

\begin{notation}
For a given $r$-independent sequence $(a_1,\dots,a_k) $ we denote
by $D(a_1,\dots,a_k)$ the set of all $j\in\Bbb Z/n\Bbb Z$ such
that $(a_1,\dots,a_k,j)$ is $r$-dependent.
\end{notation}

\begin{lemma} [{\cite[Lemma 4.4]{khu-ma-shu}}]\label{115}
If $(a_1,\dots,a_k)$ is $r$-independent, then
$$|D(a_1,\dots,a_k)|\leq q^{k+1}.$$
\end{lemma}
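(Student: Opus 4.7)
The plan is to unwrap what it means for $(a_1,\dots,a_k,j)$ to be $r$-dependent and count the witnessing tuples. By definition, $j\in D(a_1,\dots,a_k)$ exactly when there exists $(\alpha_1,\dots,\alpha_{k+1})\in\{0,1,\dots,q-1\}^{k+1}\setminus\{(0,\dots,0)\}$ such that
$$\sum_{i=1}^{k}(1-r^{\alpha_i})a_i+(1-r^{\alpha_{k+1}})j\equiv 0\pmod n. \qquad (\ast)$$
So the question reduces to: for how many $j\in \Bbb Z/n\Bbb Z$ can such a tuple exist?

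First I would rule out $\alpha_{k+1}=0$ in $(\ast)$. Indeed, if $\alpha_{k+1}=0$, then $(\ast)$ reads $\sum_{i=1}^{k}(1-r^{\alpha_i})a_i\equiv 0\pmod n$ with $(\alpha_1,\dots,\alpha_k)$ not all zero, i.e.\ precisely $r$-dependence of $(a_1,\dots,a_k)$, contradicting the hypothesis. Hence every witnessing tuple satisfies $1\leq \alpha_{k+1}\leq q-1$.

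Next I would establish that for $1\leq \alpha_{k+1}\leq q-1$, the element $1-r^{\alpha_{k+1}}$ is a unit in $\Bbb Z/n\Bbb Z$. This is the place where the full strength of \eqref{prim} is used: if a prime $p\mid n$ also divided $1-r^{\alpha_{k+1}}$, then $r^{\alpha_{k+1}}\equiv 1\pmod p$; since $p$ is a divisor of $n$, the image of $r$ in $\Bbb Z/p\Bbb Z$ has order exactly $q$ by \eqref{prim}, forcing $q\mid \alpha_{k+1}$, a contradiction. With invertibility in hand, each of the $(q-1)q^{k}$ tuples with $\alpha_{k+1}\neq 0$ determines $j$ uniquely via
$$j=(1-r^{\alpha_{k+1}})^{-1}\sum_{i=1}^{k}(r^{\alpha_i}-1)a_i,$$
so $|D(a_1,\dots,a_k)|\leq (q-1)q^{k}\leq q^{k+1}$.

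The only genuine obstacle is the invertibility step, which really requires primitivity of $r$ modulo every prime divisor of $n$ rather than merely modulo $n$ itself; once this subtlety of \eqref{prim} is exploited, the rest of the argument is a straightforward count of admissible tuples.
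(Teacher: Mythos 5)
Your proof is correct and, as far as I can tell, follows the same strategy as the original in Khukhro--Makarenko--Shumyatsky (the paper here simply cites \cite[Lemma 4.4]{khu-ma-shu} without reproducing the argument). The two essential moves are exactly the right ones: first, ruling out $\alpha_{k+1}=0$ by appealing to the $r$-independence of $(a_1,\dots,a_k)$; second, observing that $1-r^{\alpha_{k+1}}$ is a unit modulo $n$ for $1\leq\alpha_{k+1}\leq q-1$, so that each admissible exponent tuple pins down $j$ uniquely. Your remark that the invertibility genuinely requires primitivity of $r$ modulo every prime divisor of $n$, not merely modulo $n$, is a correct and worthwhile observation --- order $q$ mod $n$ alone would not prevent some prime $p\mid n$ from dividing $1-r^{\alpha_{k+1}}$ --- and this is precisely why condition \eqref{prim} is stated for all divisors $d$ of $n$. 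The final count $(q-1)q^k\leq q^{k+1}$ matches the claimed bound.
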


\vskip1ex
 We now present a series of lemmas that were proved in \cite{khu-ma-shu} for Lie algebras but need only some minor modifications to be used for the case of Lie type algebras.

\vskip1ex

\begin{notation} The order of an element $b\in\Bbb Z/n\Bbb Z$ (in
the additive group) is denoted by $o(b)$.
\end{notation}

\begin{lemma}[{\cite[Lemma 4.6]{khu-ma-shu}}]\label{lb}
Suppose that a  $(\Z/n\Z)$-graded Lie type algebra $L$ with
$L_0=0$ satisfies the selective $c$-nilpotency
condition~\eqref{select}. Let   $b$ be an element of
$\Bbb{Z}/n\Bbb{Z}$ such that $o(b)>2^{2^{2q-3}-1}c^{2^{2q-3}}$.
Then there are at most $q^{c+1}$ elements $a\in\Bbb{Z}/n\Bbb{Z}$
such that $[L_a, \underbrace{L_b,\dots,L_b}_{c}] \neq 0$.
\end{lemma}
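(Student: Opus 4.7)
The plan is to reduce the lemma to a purely number-theoretic statement about the $r$-independence of the constant sequence $(b, b, \dots, b)$ of length $c$, and then verify that statement from the lower bound on $o(b)$.

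First, I would invoke the selective $c$-nilpotency condition: if $[L_a, L_b, \dots, L_b] \neq 0$ (with $c$ copies of $L_b$), the length-$(c+1)$ sequence $(a, b, \dots, b)$ must be $r$-dependent. Since $r$-dependence is symmetric in its entries, this is precisely the assertion $a \in D(b, b, \dots, b)$ with $c$ copies of $b$. Hence, if $(b, b, \dots, b)$ (of length $c$) is itself $r$-independent, Lemma \ref{115} applies and gives $|D(b, \dots, b)| \leq q^{c+1}$, finishing the proof.

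It remains to rule out $r$-dependence of $(b, \dots, b)$ under the hypothesis $o(b) > 2^{2^{2q-3}-1} c^{2^{2q-3}}$. Unwinding the definition, $r$-dependence means that for some nonzero tuple $(\alpha_1, \dots, \alpha_c) \in \{0,\dots,q-1\}^c$,
\[
\sum_{i=1}^c r^{\alpha_i} \equiv c \pmod{o(b)}.
\]
Grouping indices ($n_j := \#\{i : \alpha_i = j\}$, with $\sum_{j=0}^{q-1} n_j = c$), this is equivalent to the existence of non-negative integers $n_1, \dots, n_{q-1}$, not all zero, with $\sum n_j \leq c$, satisfying $\sum_{j=1}^{q-1} n_j (r^j - 1) \equiv 0 \pmod{o(b)}$. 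I would need to show that no such relation exists.

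This last step is the main obstacle, and it is purely number-theoretic. Two ingredients from the primitivity condition \eqref{prim} will be essential: each $r^j - 1$ for $1 \leq j \leq q-1$ is a unit modulo every divisor of $n$ (hence modulo $o(b)$), and consequently the cyclotomic identity $1 + r + r^2 + \dots + r^{q-1} \equiv 0 \pmod{o(b)}$ holds (multiply through by $r - 1$ and use $r^q \equiv 1$). The tower-of-exponents shape of the bound $2^{2^{2q-3}-1} c^{2^{2q-3}}$ is the signature of an iterative elimination: I expect to repeatedly use the cyclotomic identity to remove one power $r^j$ at a time from any putative relation, doubling the coefficient bound at each step, so that after about $2q - 3$ rounds the surviving identity forces $o(b)$ to divide an integer bounded by the stated quantity, contradicting the hypothesis. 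Since this argument is about $r$-independence and primitivity alone, it uses no Lie (or Lie type) structure of $L$ and so should go through exactly as for Lemma 4.6 of \cite{khu-ma-shu}, requiring no adaptation to the Lie type setting.
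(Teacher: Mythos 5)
Your reduction is correct and matches the structure that the cited Lemma~4.6 of \cite{khu-ma-shu} must have: if $[L_a,L_b,\dots,L_b]\neq 0$ then $(a,b,\dots,b)$ is $r$-dependent by \eqref{select}, and (using that $r$-dependence is preserved under permutation) this means $a\in D(b,\dots,b)$, so once the constant sequence $(b,\dots,b)$ of length $c$ is known to be $r$-independent, Lemma~\ref{115} gives $|D(b,\dots,b)|\leq q^{c+1}$ directly. Your translation of the remaining obligation into the congruence $\sum_{j=1}^{q-1}n_j(r^j-1)\equiv 0\pmod{o(b)}$ with $n_j\geq 0$, $\sum n_j\leq c$, not all zero, is also correct, and you rightly observe that the relevant tools are exactly the unit property of $r^j-1$ and the cyclotomic identity, both consequences of \eqref{prim}, so that the argument is purely arithmetic and hence imports verbatim to the Lie-type setting --- which is precisely why the paper gives no separate proof. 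The one thing to flag is that you leave the actual number-theoretic verification as an expectation (``I expect to repeatedly use the cyclotomic identity \dots''): the doubly-exponential shape of the bound $2^{2^{2q-3}-1}c^{2^{2q-3}}$ is indeed the signature of an iterative elimination with coefficient doubling, but to have a complete proof you would need to carry that elimination out carefully and check that the resulting integer divisor of $o(b)$ is both nonzero and bounded by the stated quantity; as written, this central step is a plausible sketch rather than a proof, and it is exactly the content that \cite{khu-ma-shu} supplies.
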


\begin{proof} The proof is exactly the same as that of
Lemma~4.6  in \cite{khu-ma-shu}.
\end{proof}

\begin{lemma}[{\cite[Lemma 4.7]{khu-ma-shu}}]\label{l_b}
Suppose that a $(\Z/n\Z)$-graded Lie type algebra $L$ with $L_0=0$
satisfies the selective $c$-nilpotency condition~\eqref{select}.
There is a $(c,q)$-bounded number $w$ such that
$$[
L,\underbrace{L_b,\dots,L_b}_{w}] =0$$ whenever $b$ is an element
of $\Bbb{Z}/n\Bbb{Z}$ such that
$o(b)>\max\{2^{2^{2q-3}-1}c^{2^{2q-3}},q^{c+1} \}$.
\end{lemma}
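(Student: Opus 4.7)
The plan is to amplify Lemma~\ref{lb}: once we know that the exceptional set
$$S_b=\{a\in\Bbb Z/n\Bbb Z:[L_a,\underbrace{L_b,\dots,L_b}_{c}]\ne 0\}$$
has cardinality at most $q^{c+1}$, I iterate the statement along the arithmetic progression $a,a+b,a+2b,\dots$ and use the pigeonhole principle against $|S_b|\le q^{c+1}$ inside the cycle $a+\langle b\rangle$, which has more than $q^{c+1}$ elements by the hypothesis $o(b)>q^{c+1}$.

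The key step is the following inclusion. For every $a$ and every $w\ge c$, the left-normalized definition of a simple product immediately gives
$$[L_a,\underbrace{L_b,\dots,L_b}_{w}]=\big[[L_a,\underbrace{L_b,\dots,L_b}_{w-c}],\underbrace{L_b,\dots,L_b}_{c}\big]\subseteq [L_{a+(w-c)b},\underbrace{L_b,\dots,L_b}_{c}],$$
and the right-hand side is zero whenever $a+(w-c)b\notin S_b$, by the very definition of $S_b$. Moreover, if $[L_a,\underbrace{L_b,\dots,L_b}_{w}]\neq 0$ then $[L_a,\underbrace{L_b,\dots,L_b}_{w'}]\neq 0$ for every $c\le w'\le w$, since re-bracketing a zero subspace on the right with copies of $L_b$ cannot revive it. Combining these observations, non-vanishing of the full product forces
$$a+jb\in S_b\qquad\text{for every }j=0,1,\dots,w-c.$$

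Now I choose $w=c+q^{c+1}$. Then $a,a+b,\dots,a+q^{c+1}b$ are $q^{c+1}+1$ elements of $a+\langle b\rangle$, and since $o(b)>q^{c+1}$ they are pairwise distinct. This contradicts $|S_b|\le q^{c+1}$, hence $[L_a,\underbrace{L_b,\dots,L_b}_{w}]=0$ for every $a$; summing over the grading $L=\bigoplus_a L_a$ yields $[L,\underbrace{L_b,\dots,L_b}_{w}]=0$ with $w=c+q^{c+1}$, which is manifestly $(c,q)$-bounded.

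Essentially all of the combinatorial work has already been absorbed into Lemma~\ref{lb}. The only point where one might worry about the absence of anticommutativity (the main difficulty highlighted in the introduction for the Lie type setting compared with \cite{khu-ma-shu}) is in the opening re-association above, but this is merely the definition of the left-normalized bracket, so no instance of the identity~\eqref{tozh2} is needed beyond its implicit use in the proof of the preceding lemma. Accordingly, there is no serious obstacle beyond stating things carefully, and the proof indeed ``follows the proof in \cite{khu-ma-shu}.''
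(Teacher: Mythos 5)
Your proof is correct: you combine Lemma~\ref{lb} with the pigeonhole principle along the coset $a+\langle b\rangle$, using the hypothesis $o(b)>q^{c+1}$ to guarantee $q^{c+1}+1$ distinct elements of the exceptional set, and the only algebraic input beyond Lemma~\ref{lb} is the left-normalization of simple products, which indeed requires no anticommutativity. This is precisely the argument of Lemma~4.7 in \cite{khu-ma-shu}, to which the paper defers, so the approach is essentially the same as the paper's.
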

\begin{proof}  See Lemma 4.7 in \cite{khu-ma-shu}.
\end{proof}

\begin{lemma}[{\cite[Lemma 4.8]{khu-ma-shu}}]\label{odin} Let $(d_1,\dots, d_c)$ be an arbitrary $r$-independent
sequence and $U=[u_{d_1},\dots,u_{d_c}]$ be a homogeneous product
with indices $(d_1,\dots, d_c)$ (under Index Convention). Suppose
that a $(\Z/n\Z)$-graded Lie type algebra $L$ with $L_0=0$
satisfies the selective $c$-nilpotency condition~\eqref{select}.
Then
 \begin{enumerate}[a)]
  \item
 every product of the form
\begin{equation}\label{eq4-1}
[U,x_{i_1},\dots,x_{i_t}]
\end{equation}
 can be
written as a linear combination of products of the form
\begin{equation}\label{eq5-1}
[U, m_{j_1},\dots,m_{j_{s}}],
\end{equation}
where $j_k\in D(d_1,\dots,d_c)$ and $s\leq t$. The case $s=t$ is
possible only if $i_k\in D(d_1,\dots, d_c)$ for all $k=1,\dots,t$.
\item every product of the form
\begin{equation}\label{eq4}
[x_{i_1}, U, \dots,x_{i_t}]
\end{equation}
 can be
written as a linear combination of products of the form
\begin{equation}\label{eq5}
[m_{j_1},U,\dots,m_{j_{s}}],
\end{equation}
where $j_k\in D(d_1,\dots,d_c)$ and $s\leq t$. The case $s=t$ is
possible only if $i_k\in D(d_1,\dots, d_c)$ for all $k=1,\dots,t$.
\end{enumerate}
\end{lemma}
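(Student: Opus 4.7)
The plan is to prove parts (a) and (b) simultaneously by induction on $t$, using the defining identity~\eqref{tozh} in the form of swaps inside left-normalized products and the selective $c$-nilpotency condition~\eqref{select} to annihilate unwanted terms. The indispensable preliminary is that $r$-(in)dependence of a sequence depends only on its underlying multiset, since permuting $(a_1,\dots,a_k)$ amounts to permuting the corresponding exponents $\alpha_i$ in the definition. Consequently, whenever $(d_1,\dots,d_c,e)$ is $r$-independent, \eqref{select} forces every left-normalized product of length $c+1$ in $u_{d_1},\dots,u_{d_c},v_e$ to vanish regardless of order; and since any complex product can be rewritten as a linear combination of left-normalized ones in the same elements via~\eqref{tozh2} (as recalled in~\S\,2), this yields $[U,v_e]=0$ and, crucially for (b), also $[v_e,U]=0$ for every such $v_e\in L_e$.

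For part (a), if all $i_k\in D(d_1,\dots,d_c)$ the product is already in target form with $s=t$. Otherwise we let $k$ be the smallest index with $i_k\notin D$. If $k=1$, the preliminary remark gives $[U,x_{i_1}]=0$ and the whole product vanishes. If $k\ge 2$, we apply~\eqref{tozh} to the adjacent factors $x_{i_{k-1}},x_{i_k}$ inside the product to rewrite it as
$$
\alpha\,[U,x_{i_1},\dots,x_{i_{k-2}},[x_{i_{k-1}},x_{i_k}],x_{i_{k+1}},\dots,x_{i_t}] + \beta\,[U,x_{i_1},\dots,x_{i_{k-2}},x_{i_k},x_{i_{k-1}},x_{i_{k+1}},\dots,x_{i_t}].
$$
The first summand has only $t-1$ leaves after $U$ and is disposed of by induction. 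The second summand has $t$ leaves but with $x_{i_k}$ shifted one slot leftward, so repeating the swap shoves $x_{i_k}$ all the way to the position immediately after $U$, where the leading bracket $[U,x_{i_k}]=0$ kills what remains. The side summands accumulated along the way each have $t-1$ leaves and are handled by induction, so the claim follows with $s\le t-1<t$.

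Part (b) follows the same outline but its decisive step is the main obstacle of the lemma, since Lie type algebras lack anticommutativity and one cannot flip $[x_{i_1},U,\dots]$ into a part-(a) product. If $i_1\notin D$ the preliminary remark gives $[x_{i_1},U]=0$ and the product vanishes. Otherwise $i_1\in D$; if some $i_k\notin D$ with $k\ge 2$, we push $x_{i_k}$ leftward through its predecessors by the same swap mechanism (each swap again emitting an induction-eligible side term with $t-1$ leaves) until it sits in the slot immediately to the right of $U$. Now~\eqref{tozh} applied to the triple $(x_{i_1},U,x_{i_k})$ gives
$$
[x_{i_1},U,x_{i_k},\dots] = \alpha\,[x_{i_1},[U,x_{i_k}],\dots] + \beta\,[x_{i_1},x_{i_k},U,\dots].
$$
The first summand is zero since $[U,x_{i_k}]=0$, and the second equals $[y,U,\dots]$ with $y=[x_{i_1},x_{i_k}]\in L_{i_1+i_k}$: if $i_1+i_k\in D$ the induction hypothesis for~(b) finishes the job, while if $i_1+i_k\notin D$ the preliminary remark annihilates $[y,U]$ and the term vanishes outright.

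The principal difficulty is exactly this finish of part~(b): without anticommutativity the only way to eliminate a bad index sitting to the right of $U$ is to smuggle it past $U$ by fusing it with the head $x_{i_1}$, and one must then separately verify that the newly created head index either cooperates with the induction or is itself annihilated by selective $c$-nilpotency---this is why the symmetric statement $[v_e,U]=0$ and the multiset-invariance of $r$-(in)dependence are both essential.
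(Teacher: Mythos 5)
Your argument is correct and takes essentially the same route as the paper: induction on $t$, a secondary push to move the leftmost bad factor adjacent to $U$, and the crucial swap $[x_{i_1},U,x_{i_k}]=\alpha[x_{i_1},[U,x_{i_k}]]+\beta[[x_{i_1},x_{i_k}],U]$ to fuse the bad index with the head. The one thing you do more carefully than the paper is to spell out, via the multiset-invariance of $r$-(in)dependence together with \eqref{tozh2}, why $[v_e,U]=0$ for $e\notin D(d_1,\dots,d_c)$ even though $[v_e,U]$ is not itself a left-normalized product of length $c+1$; the paper invokes this at the base case ($t=1$ and $k=1$) simply ``by \eqref{select}'' without comment.
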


\begin{proof} The part a) is proved in the same way as Lemma 4.8 in
\cite{khu-ma-shu} applying everywhere the property~(\ref{tozh})
instead of the  Jacobi identity.

\vskip1ex  To prove part b) we use induction on $t$.  If $t=0$,
there is nothing to prove. If $t=1$ and $i_1\in D(d_1,\dots,d_c)$,
then $[x_{i_1}, U]$ is of the required form. If $i_1\notin
D(d_1,\dots,d_c)$, then $[x_{i_1}, U]=0$ by \eqref{select}.

\vskip1ex

 Let  $t>1$. If all the indices $i_j$
belong to $D(d_1,\dots,d_c)$, then the product
$[x_{i_1},U,\dots,x_{i_t}] $ is of the required form with $s=t$.
Suppose that in \eqref{eq4} there is an element $x_{i_k}$ with the
index $i_k$ that does not belong to $D(d_1,\dots,d_c)$. Let $k$ be
as small as possible.  We use $k$ as a second induction parameter.

\vskip1ex If $k=1$, then the product \eqref{eq4} is zero by
\eqref{select} and we are done. If $k=2$ we rewrite product
\eqref{eq4} using (\ref{tozh}) as
$$[x_{i_{1}}, U, x_{i_{2}}, \ldots, x_{i_t}]=\alpha [x_{i_{1}}, [U,x_{i_2}], \ldots, x_{i_t}]+\beta [[x_{i_{1}},
x_{i_2}],U,\ldots, x_{i_t}]$$$$= \alpha [x_{i_{1}}, [U,
x_{i_2}],\ldots, x_{i_t}]+\beta [[x_{i_{1}+i_2},U,\ldots,
x_{i_t}],
$$
where $x_{i_{1}+i_2}=[x_{i_{1}}, x_{i_2}]$ (under the Index
Condition). The first term is trivial by \eqref{select} because
$x_{i_2}\notin D(d_1,\dots,d_c)$.  The second term is of required
form by induction hypothesis because it is shorter than the
original one.

\vskip1ex Suppose that $k\geq 3$. We  rewrite \eqref{eq4} using
\eqref{tozh} as
$$[x_{i_{1}},U,\dots,x_{i_{k-1}},x_{i_k},\dots,x_{i_t}]=\alpha \big[x_{i_{1}},U,\dots,[x_{i_{k-1}},x_{i_k}],
\dots,x_{i_t}\big]+$$
$$+ \beta [x_{i_{1}},U,\dots,x_{i_k},x_{i_{k-1}},\dots,x_{i_t}].$$
By the induction hypothesis the first term is a linear combination
of products of the form~\eqref{eq5} because it is shorter than
\eqref{eq4}, while the second term has the required form because
the index that does not belong to $D(d_1,\dots,d_c)$ here occurs
closer to $U$ than in \eqref{eq4}.
\end{proof}

Let $D=|D(d_1,\dots,d_c)|$ and let $w$ be the number given by
Lemma~\ref{l_b}.

\begin{lemma}[{\cite[Lemma 4.10]{khu-ma-shu}}]\label{dva}
Let $D=|D(d_1,\dots,d_c)|$ and let $w$ be the number given by
Lemma~\ref{l_b}. Suppose further that $L$ and $U$ are as in
Lemma~\ref{odin}. Then the ideal of $L$ generated by $U$
 is spanned by
products of the form
\begin{equation}\label{eq6-1}
[U, m_{i_1},\dots,m_{i_u},m_{i_{u+1}},\dots,m_{i_{v}}]
\end{equation}
and
\begin{equation}\label{eq6-2}
[m_{i_1},U, m_{i_2},\dots,m_{i_u},m_{i_{u+1}},\dots,m_{i_{v}}]
\end{equation}
 where $u\leq (w-1)D+1$, $i_k\in D(d_1,\dots, d_c)$ for all $k=1,2\ldots v$, and  $o(i_k)\leq N(c,q)$ for $k>u$.
\end{lemma}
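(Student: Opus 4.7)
The plan is to combine Lemma~\ref{odin}, a rearrangement argument based on (\ref{tozh}), and Lemma~\ref{l_b}. By the discussion in Section~2, the two-sided ideal generated by $U$ is spanned by simple products in which $U$ occurs in position $1$ or position $2$, so it suffices to write each such product as a linear combination of products of the forms (\ref{eq6-1}) and (\ref{eq6-2}). Applying Lemma~\ref{odin}, parts a) and b) respectively, one may assume that every remaining index $i_k$ lies in $D(d_1,\dots,d_c)$. Call an index $i_k$ \textit{large} if $o(i_k)>N(c,q):=\max\{2^{2^{2q-3}-1}c^{2^{2q-3}},\,q^{c+1}\}$ and \textit{small} otherwise; the large indices take at most $D=|D(d_1,\dots,d_c)|$ distinct values.

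Next I would use the identity $[X,a,b]=\alpha[X,[a,b]]+\beta[[X,b],a]$, a direct consequence of (\ref{tozh}), to rearrange entries appearing after $U$. Each application either swaps two adjacent entries or replaces them by a single entry of index equal to their sum, the latter producing a product of strictly smaller length, which is handled by an outer induction on the total length of the product. Iterating, I would move all large-order entries into one consecutive block placed immediately after $U$, push all small-order entries to the tail, and further sort the large block so that entries of equal index value stand adjacent. In form (\ref{eq6-2}), the entry $m_{i_1}$ before $U$ is left untouched.

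Finally, let $\ell$ be the size of the large block after this rearrangement. If $\ell\geq (w-1)D+1$, then pigeonhole over the at most $D$ distinct large values forces some value $b$ to appear at least $w$ times; since equal-valued entries have been made consecutive, the product contains a subproduct $[Z,m_b,m_b,\dots,m_b]$ with $w$ copies of $m_b$, which vanishes by Lemma~\ref{l_b}, so the whole product is zero modulo the shorter terms already absorbed into the induction. Otherwise $\ell\leq (w-1)D$, and setting $u=\ell$ in form (\ref{eq6-1}) or $u=\ell+1$ in form (\ref{eq6-2}) (to accommodate the fixed $m_{i_1}$) yields the stated bound $u\leq (w-1)D+1$. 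The main obstacle is the absence of anticommutativity: every swap introduces an extra merged term via (\ref{tozh2}), so the induction must be stratified (primary on total length, secondary on the number of inversions with respect to the target sorted order) to guarantee termination. The fixed position of $m_{i_1}$ ahead of $U$ in form (\ref{eq6-2}) is precisely what accounts for the $+1$ in the bound.
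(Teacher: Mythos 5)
Your proof is correct and follows essentially the same route as the paper: first reduce via Lemma~\ref{odin} to products whose indices lie in $D(d_1,\dots,d_c)$, then show that the entries after $[U,m_{j_1}]$ (respectively after $[m_{j_1},U]$) can be rearranged modulo shorter terms using~(\ref{tozh}), group the large-order entries into a block, and kill the product via Lemma~\ref{l_b} by pigeonhole when that block is too long. The only cosmetic deviation is that for the form~(\ref{eq6-1}) you permute $m_{i_1}$ as well, which gives a marginally sharper $u\leq (w-1)D$ there, whereas the paper holds $m_{j_1}$ fixed in both forms and attributes the $+1$ uniformly to it; this does not affect validity.
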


\begin{proof} By Lemma \ref{odin} the ideal  generated by $U$
is spanned by the products of the two forms \eqref{eq5-1} and
\eqref{eq5}. We denote this span by $R$. Exactly in the same
manner as in Lemma 4.10 in \cite{khu-ma-shu}), we prove by
induction on the length of the products  that \eqref{eq5-1} and
\eqref{eq5} do not change modulo $R$ under any permutation of the
$m_{j_k}$, $k\geq 3$. To adapt the proof in \cite{khu-ma-shu} we
only need to replace the Jacobi identity by \eqref{tozh} and Lemma
4.8 in~\cite{khu-ma-shu} by Lemma~\ref{odin}.

If among the $m_{j_k}$ there are at least $w$ elements with the
same index $j_k$ such that $o(j_k)\geq N(c,q)$, we move these
elements next to each other. Then by Lemma~\ref{l_b} the
corresponding product is equal to zero. If all the indices $j_k\in
A$ occur less than $w$ times, we place all these elements right
after the $[U,m_{j_1}]$ or respectively $[m_{j_1},U]$. This
initial segment has length at most $D(w-1)+c+1$, so the resulting
products take the required form~\eqref{eq6-1} or \eqref{eq6-2}.
\end{proof}

\begin{proposition}[{\cite[Corollary 4.11]{khu-ma-shu}}]\label{malocomp}
Suppose that a $(\Z/n\Z)$-graded Lie type algebra  $L$ with
$L_0=0$ satisfies the selective $c$-nilpotency
condition~\eqref{select}, and let $(d_1,\dots, d_c)$ be an
$r$-independent sequence. Then the ideal $_{\rm id}\langle
[L_{d_1},\dots, L_{d_c}]\rangle$ has $(c,q)$-boundedly many
non-trivial components of the induced grading.
\end{proposition}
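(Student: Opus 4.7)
The plan is to combine Lemma~\ref{dva} with a straightforward counting of residues modulo $n$; everything hard has already been done by the time we get to this corollary.

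First, I would observe that $_{\rm id}\langle[L_{d_1},\dots,L_{d_c}]\rangle$ is the sum, over all homogeneous choices $u_{d_i}\in L_{d_i}$, of the one-generator ideals $_{\rm id}\langle[u_{d_1},\dots,u_{d_c}]\rangle$. Applying Lemma~\ref{dva} to each $U=[u_{d_1},\dots,u_{d_c}]$ and summing, the whole ideal is spanned by products of the two forms~\eqref{eq6-1} and~\eqref{eq6-2}, subject to the constraints $u\le(w-1)D+1$, $i_k\in D(d_1,\dots,d_c)$ for all $k$, and $o(i_k)\le N(c,q)$ for $k>u$. Any such product lives in the homogeneous component $L_s$ with
$$s\equiv(d_1+\cdots+d_c)+(i_1+\cdots+i_u)+(i_{u+1}+\cdots+i_v)\pmod n,$$
so it remains only to bound the number of residues $s\in\Z/n\Z$ that can occur.

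Next I would estimate the contributions of the initial segment and of the tail separately. For the initial segment, Lemma~\ref{115} gives $D=|D(d_1,\dots,d_c)|\le q^{c+1}$, and $u\le(w-1)D+1$ is also $(c,q)$-bounded, so the sum $i_1+\cdots+i_u$ depends only on a multiset of bounded size drawn from a set of size $D$, and thus takes only a $(c,q)$-bounded number of values in $\Z/n\Z$. For the tail, every $i_k$ with $k>u$ satisfies $o(i_k)\le N(c,q)$, so it lies in the subgroup
$$A=\{a\in\Z/n\Z\mid Ma\equiv 0\pmod n\},\qquad M:=\mathrm{lcm}\{1,2,\dots,N(c,q)\},$$
whose order divides $M$ and is therefore $(c,q)$-bounded; since $A$ is a subgroup, the tail sum $i_{u+1}+\cdots+i_v$ also lies in $A$, contributing at most $|A|\le M$ distinct values. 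Multiplying the two bounds gives a $(c,q)$-bounded count of realized residues $s$, which is the conclusion.

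The only step to be a little careful with --- and the real point of the argument beyond invoking Lemma~\ref{dva} --- is the tail estimate: the number of tail indices $i_{u+1},\dots,i_v$ is a priori unbounded, but the hypothesis $o(i_k)\le N(c,q)$ forces them all into the common small subgroup $A\subseteq\Z/n\Z$, so their sum cannot escape $A$. This is precisely what makes the final bound on the number of nonzero components independent of $n$; without this observation one would only recover the trivial bound $|\Z/n\Z|=n$.
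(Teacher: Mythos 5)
Your proof is correct and reconstructs exactly the argument the paper delegates to \cite[Corollary 4.11]{khu-ma-shu}: decompose via Lemma~\ref{dva}, bound the initial-segment contribution using $|D(d_1,\dots,d_c)|\leq q^{c+1}$ (Lemma~\ref{115}) and $u\le(w-1)D+1$, and observe that the tail indices with $o(i_k)\le N(c,q)$ all lie in a common subgroup of $\Z/n\Z$ of $(c,q)$-bounded order, so the tail sum is confined there regardless of its length. Your closing remark correctly identifies the subgroup argument as the one nontrivial point preventing the count from depending on $n$.
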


\begin{proof} The proof can be easily reconstructed from the proof of Corollary 4.11 in
\cite{khu-ma-shu}.  We only need  to replace Lemmas  4.4, 4.7, 4.8
and  4.10 in \cite{khu-ma-shu} by  Lemmas \ref{115}, \ref{l_b},
\ref{odin} and \ref{dva} respectively.
\end{proof}

\begin{lemma}[{\cite[Lemma 4.12]{khu-ma-shu}}]\label{pyat}
Suppose that a homogeneous ideal $T$ of a Lie type algebra $L$ has
only $e$ non-trivial components. Then $L$ has at most $e^2$
components that do not centralize $T$.
\end{lemma}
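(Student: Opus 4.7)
The plan is to reduce the statement to a purely combinatorial counting argument on the grading, with no real use of the Lie type identity beyond the fact that brackets respect the grading. Let $i_1,\dots,i_e\in\Z/n\Z$ be the indices of the non-trivial homogeneous components of $T$, so that $T=T_{i_1}\oplus\cdots\oplus T_{i_e}$ with each $T_{i_k}\ne 0$.

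The central observation is that since $T$ is a homogeneous two-sided ideal, for every $j\in\Z/n\Z$ and every $k\in\{1,\dots,e\}$ we have the inclusion
$$[L_j,T_{i_k}]+[T_{i_k},L_j]\subseteq T\cap L_{j+i_k}=T_{j+i_k}.$$
Consequently, if $L_j$ does not centralize $T$, then at least one of the brackets $[L_j,T_{i_k}]$ or $[T_{i_k},L_j]$ must be non-zero for some $k$, and this forces $T_{j+i_k}\ne 0$. Hence $j+i_k\equiv i_l\pmod n$ for some $l\in\{1,\dots,e\}$, which means that $j$ belongs to the difference set
$$D=\{i_l-i_k\pmod n : 1\le k,l\le e\},$$
whose cardinality is at most $e^2$. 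This gives the desired bound on the number of non-centralizing components.

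I do not anticipate any real obstacle here: the Lie type identity \eqref{tozh} plays no role beyond the already-assumed homogeneity of $T$, which guarantees that the bracket of a component of $T$ with a component of $L$ lands in a single component of $T$. The argument is essentially identical in spirit to its Lie-algebra counterpart and reduces to a pigeonhole count on the difference set of the indices $i_1,\dots,i_e$.
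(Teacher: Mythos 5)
Your argument is correct and is essentially the standard proof of this lemma: since $T$ is a homogeneous two-sided ideal, $[L_j,T_{i_k}]+[T_{i_k},L_j]\subseteq T\cap L_{j+i_k}=T_{j+i_k}$, so any non-centralizing index $j$ must lie in the difference set $\{i_l-i_k\}$ of size at most $e^2$; the Lie type identity indeed plays no role. The paper gives no proof here, deferring to \cite[Lemma 4.12]{khu-ma-shu}, and your reasoning matches that source's argument and carries over verbatim to the Lie type setting.
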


\begin{proposition}[{\cite[Proposition 4.13]{khu-ma-shu}}]\label{razresh}
Suppose that a $(\Z/n\Z)$-graded Lie type algebra $L$ with $L_0=0$
satisfies the selective $c$-nilpotency condition~\eqref{select}.
Then $L$ is solvable of $(c,q)$-bounded derived length $f(c,q)$.
\end{proposition}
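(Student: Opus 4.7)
The plan is to proceed by induction on $c$. The base case $c=0$ is immediate: by Remark \ref{remark2}, the selective $0$-nilpotency condition together with $L_0=0$ forces $L=0$, which is solvable of derived length $0$.

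For the inductive step, assume the bound $f(c-1,q)$ holds for algebras satisfying the selective $(c-1)$-nilpotency condition. Pick any $r$-independent sequence $(d_1,\ldots,d_c)$ with all $L_{d_i}\ne 0$ (if no such sequence exists, then only $r$-dependent sequences of length $c$ contribute nontrivially, and one shows directly using Lemma \ref{115} that $L$ itself has $(c,q)$-boundedly many nontrivial components and apply Proposition \ref{small-comp}). Set $I=I_{(d)}={}_{\rm id}\langle [L_{d_1},\ldots,L_{d_c}]\rangle$. By Proposition \ref{malocomp}, $I$ has only $e=e(c,q)$ nontrivial homogeneous components; since $I_0\subseteq L_0=0$, Proposition \ref{small-comp} gives that $I$ is solvable of $(c,q)$-bounded derived length $s(c,q)$. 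Using identity \eqref{tozh}, the two-sided centralizer $C_L(I)$ is a homogeneous ideal, and by Lemma \ref{pyat} the set $A$ of indices $i$ with $L_i\not\subseteq C_L(I)$ has size at most $e^2$; hence $L/C_L(I)$ has at most $e^2$ nontrivial components and is solvable of $(c,q)$-bounded derived length by Proposition \ref{small-comp}.

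Next, incorporate the $H$-action: since $H$ permutes the grading components and $|H|=q$, the ideal $J=\sum_{h_1,\ldots,h_c\in H} I_{(d_1^{h_1},\ldots,d_c^{h_c})}$ is a sum of at most $q^c$ ideals, each solvable of length $s(c,q)$, and so $J$ itself is solvable of $(c,q)$-bounded length (using the standard fact that a sum of finitely many solvable ideals in a Lie type algebra is solvable, with length bounded by the sum of the individual lengths — this uses only that the sum of ideals is an ideal). One then invokes Lemma \ref{odin} and Lemma \ref{dva} to rewrite arbitrary long products modulo $J$ so that all but boundedly many indices are confined to $D(d_1,\ldots,d_c)$, which has size $\le q^{c+1}$ by Lemma \ref{115}. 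The upshot is that the quotient $\bar L=L/J$ either satisfies the selective $(c-1)$-nilpotency condition (whereupon the inductive hypothesis applies and $\bar L^{(f(c-1,q))}=0$), or else $\bar L$ has only $(c,q)$-boundedly many nontrivial components, in which case Proposition \ref{small-comp} again yields a $(c,q)$-bounded derived length. In either case $L^{(f(c-1,q)+s(c,q))}\subseteq J^{(0)}=J$ and a further $s(c,q)$ steps kill $J$, giving the desired $(c,q)$-bounded $f(c,q)$.

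The main obstacle is verifying that the bound does not depend on $n$. Each individual $I_{(d)}$ is controlled, but $L$ may contain unboundedly many $r$-independent sequences, so one cannot naively sum over all of them. The combinatorial heart of the argument is therefore the reduction, via Lemmas \ref{odin}--\ref{dva}, showing that only a $(c,q)$-bounded family of such ideals is actually needed to absorb all long products with $r$-independent index patterns, while the remaining $r$-dependent patterns involve indices drawn from sets of $(c,q)$-bounded size by Lemma \ref{115}. Getting the bookkeeping of this reduction right — tracking how many $H$-orbits of $r$-independent $c$-tuples one must handle, and confirming that each step of the recursion strictly decreases a $(c,q)$-bounded invariant — is where the real work lies, and it is precisely what the lemmas in the preceding sections were engineered to make possible.
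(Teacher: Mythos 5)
Your proposal goes wrong at the central structural step, and you seem to half-recognize the difficulty at the end without resolving it.

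You pick a \emph{single} $r$-independent $c$-tuple $(d_1,\dots,d_c)$ (or one $H$-orbit of such tuples) and form $J=\sum I_{(d^h)}$. You then assert that $L/J$ "either satisfies the selective $(c-1)$-nilpotency condition, or else has only $(c,q)$-boundedly many nontrivial components." Neither alternative is justified. There may be $r$-independent $c$-tuples whose products do not vanish modulo $J$ — nothing confines them to the $H$-orbit of $(d_1,\dots,d_c)$, and Lemmas~\ref{odin}--\ref{dva} control products \emph{inside} the ideal generated by $U=[u_{d_1},\dots,u_{d_c}]$, not arbitrary long products in the quotient. So $L/J$ need not satisfy the $(c-1)$-condition, and you have not shown it has few components either. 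You explicitly flag the obstacle ("$L$ may contain unboundedly many $r$-independent sequences, so one cannot naively sum over all of them") but do not close the gap.

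The paper's proof closes exactly this gap with a different and more economical idea. It \emph{does} take $I$ to be the ideal generated by $[L_{i_1},\dots,L_{i_c}]$ over \emph{all} $r$-independent $c$-tuples $(i_1,\dots,i_c)$, so that $L/I$ trivially satisfies the selective $(c-1)$-nilpotency condition and the inductive hypothesis gives $L^{(f_0)}\subseteq I$. The point you miss is that one never needs $I$ itself to be solvable of bounded length (which is what seems hopeless when $I$ is a sum of unboundedly many ideals). Instead, for each summand $T={}_{\rm id}\langle [L_{i_1},\dots,L_{i_c}]\rangle$ one shows, via Proposition~\ref{malocomp}, Lemma~\ref{pyat} and Proposition~\ref{small-comp}, that $L/C_L(T)$ is solvable of a derived length $f_1$ that is \emph{uniform} in the tuple — it depends only on $(c,q)$. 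Hence $[L^{(f_1)},T]=0$ for every such $T$, so $[L^{(f_1)},I]=0$, and then $[L^{(f_1)},L^{(f_0)}]=0$, giving derived length at most $\max\{f_0,f_1\}+1$. The uniformity of $f_1$ over all $T$ is the whole trick; summing "over all of them" is harmless precisely because the thing you sum is an annihilation statement, not a solvability statement. Your detour through solvability of each $I_{(d)}$ and of the $H$-averaged $J$ is both unnecessary and, as it stands, insufficient.

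Minor point: in your aside about the case where no $r$-independent $c$-tuple with nonzero components exists, the right tool is Lemma~\ref{rigid} (too many distinct indices force an $r$-independent subsequence), not Lemma~\ref{115}; but in the paper's approach this case never needs to be separated out, since the inductive quotient $L/I$ handles it automatically.
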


\begin{proof} We reproduce the proof of Corollary 5.10 in~\cite{khu-ma-shu} replacing the Jacobi identity by the property
\eqref{tozh} and applying Propostion \ref{small-comp} instead of
Shalev--Kreknin theorem \cite{kr, shalev}.

\vskip1ex
 We use induction on
$c$. If $c=0$, then $L=0$ by Remark~\ref{remark2} and we are done.

\vskip1ex Let $c\geq 1$.  We consider the ideal $I$  of $L$
generated by all products $[L_{i_1},\dots,L_{i_c}]$, where
$(i_1,\dots,i_c)$ ranges through all $r$-independent sequences of
length~$c$. The quotient algebra $L/I$ has  induced
$(\Z/n\Z)$-grading of $L/I$ with trivial zero-component and $L/I$
satisfies the selective $(c-1)$-nilpotency condition. It follows
by the induction hypothesis that $L/I$ is solvable of bounded
derived length, say, $f_0$, that is, $L^{(f_0)}\leq I$.

\vskip1ex Let now $(i_1,\dots,i_c)$ be  a $r$-independent sequence
$(i_1,\dots,i_c)$.  We set
$$T={}_{\rm id}\langle[L_{i_1},\dots,L_{i_c}]\rangle.$$ Proposition \ref{malocomp} implies that there are only $(c,q)$-boundedly many, say,
$e$, non-trivial grading components in $T$. By Lemma \ref{pyat}
there are at most $e^2$ components that do not centralize $T$. The
subalgebra $C_L(T)$ is also a homogeneous ideal by \eqref{tozh},
since
$$[C_L(T), L, T]\subseteq \big[C_L(T), [L, T]\big]+\big[C_L(T), T, L\big]\subseteq
[C_L(T), T]\subseteq \{0\},$$

$$[L, C_L(T),  T]\subseteq \big[L, [C_L(T),  T]\big]+\big[L, T, C_L(T)\big]\subseteq
[T, C_L(T)]\subseteq \{0\}.$$ The quotient algebra $L/C_L(T)$ has
induced $(\Z/n\Z)$-grading with trivial zero-component and with at
most $e^2$ non-trivial components.  By Proposition
\ref{small-comp} the algebra $L/C_L(T)$ is solvable of $e$-bounded
derived length, say, $f_1$. Therefore $L^{(f_1)}\subseteq C_L(T)$
and $[L^{(f_1)}, T]=0$.  Since $f_1$ does not depend on the choice
of the $r$-independent tuple $(i_1,\dots,i_c)$ and $I$ is the sum
of all such ideals $T$, it follows that $[L^{(f_1)},I]=0$. Recall
that $L^{(f_0)}\leq I$. Hence, $[L^{(f_1)},L^{(f_0)}]=0$. Thus $L$
is solvable of $(c,q)$-bounded derived length at most
$\max\{f_0,f_1\}+1$.
\end{proof}

\section{ Bounding of nilpotency class }

In this section we complete the proof of Theorem \ref{th1} by
proving the nilpotency of $(c,q)$-bounded class of a
$(\Z/n\Z)$-graded Lie type $L$ algebra with $L_0=0$ that satisfies
the selective $c$-nilpotency condition (\ref{select}). We have
already proved that $L$ is solvable of $(c,q)$-bounded derived
length and can use induction on the derived length of $L$.

\vskip1ex
 If $L$ is abelian,
there is nothing to prove. Assume that $L$ is metabelian, that is
$[[L,L], [L,L]]=0$.

\vskip1ex We will use two following lemmas from \cite{khu-ma-shu}.

\begin{lemma}[{\cite[Lemma 5.2]{khu-ma-shu}}]\label{l_b-metab} Let $L$ be a metabelian
$(\Z/n\Z)$-graded Lie type $L$ algebra with $L_0=0$ that satisfies
the selective $c$-nilpotency condition (\ref{select}). Then there
is a $(c,q)$-bounded number $m$ such that
{$[L,\underbrace{L_b,\dots,L_b}_{m} ]=0$} for every $b\in
\Bbb{Z}/n\Bbb{Z}$.
\end{lemma}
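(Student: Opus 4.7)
The plan is to follow Lemma~5.2 of~\cite{khu-ma-shu} closely, with the Jacobi identity replaced by~\eqref{tozh} throughout.  Set $N_0=\max\{2^{2^{2q-3}-1}c^{2^{2q-3}},q^{c+1}\}$.  If $o(b)>N_0$ then the assertion is immediate from Lemma~\ref{l_b}, so I may assume $o(b)\leq N_0$, which makes $o(b)$ itself $(c,q)$-bounded.  It then suffices to bound, uniformly in $a\in\Z/n\Z$ and in the choice of $x_a\in L_a$ and $x_{b,i}\in L_b$, the length $m$ of any simple product $[x_a,x_{b,1},x_{b,2},\dots,x_{b,m}]$ that can be non-zero; summing over $a$ then gives the desired vanishing on all of $L$.

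Two ingredients drive the argument.  The first is selective $c$-nilpotency: the product $[x_a,x_{b,1},\dots,x_{b,c}]$ already vanishes whenever the sequence $(a,b,\dots,b)$ of length $c+1$ is $r$-independent.  The set $A(b)\subseteq\Z/n\Z$ of ``bad'' $a$ is controlled by Lemma~\ref{115}: when $(b,\dots,b)$ is itself $r$-independent, $|A(b)|\leq q^{c+1}$ is $(c,q)$-bounded, and the degenerate case where $(b,\dots,b)$ is $r$-dependent is controlled by the $(c,q)$-bounded size of $o(b)$.  The second ingredient is the metabelian identity: for $y\in[L,L]$ and $u,v\in L$, combining~\eqref{tozh} with $[y,[u,v]]\in[[L,L],[L,L]]=0$ yields $[y,u,v]=\beta\,[y,v,u]$ with $\beta\neq 0$.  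Iterating, past the first bracket the tail entries $x_{b,i}$ in $[y,x_{b,2},\dots,x_{b,m}]$ may be permuted up to nonzero scalars, which provides the Lie type substitute for the consequence of the Jacobi identity exploited in~\cite{khu-ma-shu}.

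The two ingredients combine as in~\cite{khu-ma-shu}.  The first bracket lands in $[L,L]\cap L_{a+b}$, and subsequent bracketings shift the running index through the coset $(a+b)+\langle b\rangle$, whose size $o(b)$ is $(c,q)$-bounded.  For any index in this coset that lies outside $A(b)$, selective $c$-nilpotency terminates the product with $c$ additional brackets; the remaining case, in which the entire coset is contained in $A(b)$, forces strong arithmetic restrictions on $(b,r,c)$ that are absorbed into the final $(c,q)$-bounded value of $m$ via the same bookkeeping as in~\cite{khu-ma-shu}.  The main point requiring care in the Lie type setting is that the scalars $\alpha,\beta$ in~\eqref{tozh} may depend on the elements involved, so every rewriting step has to either produce a term forced to vanish (by selective nilpotency or by metabelianness) or leave a shape in which subsequent steps still apply.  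Since $\alpha\neq 0$ always, each scalar rearrangement is harmless, and the combinatorial argument of~\cite{khu-ma-shu} transfers verbatim to yield the required $(c,q)$-bounded $m$.
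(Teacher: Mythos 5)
Your overall plan — dispatch the case $o(b)>N_0$ via Lemma~\ref{l_b}, then for bounded $o(b)$ track the running index through the $(c,q)$-bounded orbit $a+\langle b\rangle$ and use selective $c$-nilpotency plus the metabelian rearrangement to force vanishing — is the same route the paper intends, since the paper's proof is literally the citation to Lemma~5.2 of \cite{khu-ma-shu} with the implicit understanding that (\ref{tozh}) replaces the Jacobi identity.

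There is, however, a genuine error in the Lie type substitute you propose for the metabelian identity. From (\ref{tozh}) with $y\in[L,L]$ one gets $[y,u,v]=\alpha\,[y,[u,v]]+\beta\,[y,v,u]$, and the first term dies because $y,[u,v]\in[L,L]$ and $L$ is metabelian, leaving $[y,u,v]=\beta\,[y,v,u]$. But the Lie type axiom guarantees only $\alpha\neq 0$; nothing forces $\beta\neq 0$, so the ``$\beta\neq 0$'' in your statement is unjustified. Your closing justification — ``since $\alpha\neq 0$ always, each scalar rearrangement is harmless'' — compounds this by invoking $\alpha$, which is not the scalar that appears in the rearrangement. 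The argument still survives for the purpose of proving vanishing, because $\beta=0$ collapses $[y,u,v]$ to $0$ outright, while $\beta\neq 0$ gives the intended rescaling; but this dichotomy should be stated, not papered over with a false nonvanishing claim, and you should check that the bookkeeping of \cite{khu-ma-shu} only ever uses the rearrangement in the direction that transfers vanishing (which it does, since the scalar may be zero but the identity itself holds with whatever $\beta$ the axiom produces).

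Finally, the crux of the bounded-$o(b)$ case — showing that ``the entire coset is contained in $A(b)$'' is excluded or absorbed — is only gestured at (``strong arithmetic restrictions ... absorbed into the final bound via the same bookkeeping''). That is not a proof step, though in fairness the paper offers no more detail than a citation either; if you want the reconstruction to stand on its own you should actually run the combinatorial argument of \cite{khu-ma-shu} with (\ref{tozh}) in place of Jacobi and verify that every rewriting either vanishes or keeps the needed shape.
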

\begin{proof} See Lemma 5.2 in \cite{khu-ma-shu}.
\end{proof}

\begin{lemma}[{\cite[Lemma 4.5]{khu-ma-shu}}]\label{rigid}
Suppose that for some $m$ a sequence $(a_1,\dots,a_k)$ of non-zero
elements of $\Bbb Z/n\Bbb Z$ contains at least $q^m+m$ different
values. Then one can choose an $r$-independent subsequence
$(a_1,a_{i_2},\dots,a_{i_m})$ of $m$ elements that contains $a_1$.
\end{lemma}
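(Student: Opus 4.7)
The plan is to construct the required subsequence greedily, element by element, starting with $a_1$. First I would set $a_{i_1} := a_1$; this one-element sequence is $r$-independent by Remark~\ref{remark1}. Then, for $j = 2, 3, \dots, m$ in turn, I would try to extend the already built $r$-independent partial subsequence $(a_1, a_{i_2}, \dots, a_{i_{j-1}})$ by choosing a next term $a_{i_j}$ so that $r$-independence is preserved.

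The key tool for each extension step is Lemma~\ref{115}. By the very definition of the set $D$, the extended sequence $(a_1, a_{i_2}, \dots, a_{i_{j-1}}, a_{i_j})$ is $r$-independent precisely when the value $a_{i_j}$ avoids the forbidden set $D(a_1, a_{i_2}, \dots, a_{i_{j-1}})$, and Lemma~\ref{115} provides the bound $|D(a_1, a_{i_2}, \dots, a_{i_{j-1}})| \leq q^j$. So at step $j$ I would scan the original sequence for a position whose value is new (i.e.\ different from all $j-1$ values chosen so far) and also lies outside this forbidden set.

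The counting is where the hypothesis of having at least $q^m + m$ different values enters. At step $j \leq m$ there are at least $q^m + m$ distinct candidate values; at most $j - 1$ of them are already used and at most $q^j \leq q^m$ of them are forbidden, leaving at least
$$(q^m + m) - (j-1) - q^m \;=\; m - j + 1 \;\geq\; 1$$
distinct values that are simultaneously new and admissible. Any position in the original sequence carrying such a value can serve as $a_{i_j}$. Iterating from $j = 2$ up to $j = m$ yields the desired $r$-independent subsequence containing $a_1$.

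I do not anticipate any serious obstacle here: once Lemma~\ref{115} is granted, the argument reduces to a straightforward greedy/pigeonhole construction. The only minor point that requires a line of bookkeeping is that choosing distinct values automatically produces distinct indices $i_2 < i_3 < \dots < i_m$ in the original sequence (pick any one representative position for each newly chosen value), so the output genuinely is a subsequence rather than merely a set of values.
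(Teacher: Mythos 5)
Your greedy pigeonhole argument is correct and complete. Starting from the singleton $(a_1)$ (which is $r$-independent by Remark~\ref{remark1}), at step $j$ you exclude at most $j-1$ already-used values and at most $q^j \le q^m$ values in $D(a_1,a_{i_2},\dots,a_{i_{j-1}})$, leaving at least $m-j+1\ge 1$ fresh admissible values; since $r$-independence is a property of the multiset of values and not of their order, you may reorder the chosen positions to get an honest subsequence containing $a_1$ in first place. The paper itself does not supply a proof of this lemma but merely cites \cite[Lemma 4.5]{khu-ma-shu}; your construction is the natural and, as far as one can tell, the intended one, since Lemma~\ref{115} is stated immediately before precisely to feed such a greedy step. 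The counting is tight: at the final step $j=m$ exactly one admissible value is guaranteed to remain, which explains why the hypothesis is $q^m+m$ and not less.
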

\begin{proof} See Lemma 4.5 in \cite{khu-ma-shu}.
\end{proof}

 Let $m=m(c,q)$ be as in
Lemma~\ref{l_b-metab} and put $g=(m-1)(q^{c+1}+c)+2$. We consider
the product $[[L,L]_{a_1},L_{a_2},\dots,L_{a_g}]$, where
$a_1,\dots,a_g\in  \Bbb Z/n\Bbb Z$ are non-zero. If the sequence
$(a_1,\dots,a_g)$ contains an $r$-independent sequence of length
$c+1$ that starts with $a_1$, we permute  the $L_{a_i}$ in order
to have an initial segment  with indices which form an
$r$-independent subsequence $a_1,\dots,a_{c+1}$. Then
$[[L,L]_{a_1},L_{a_2},\dots,L_{a_g}]=0$ by~\eqref{select}. If the
sequence $(a_1,\dots,a_g)$ does not contain an $r$-independent
subsequence of length $c+1$ starting with $a_1$, then by
Lemma~\ref{rigid} the sequence $(a_1,\dots,a_g)$ contains at most
$q^{c+1}+c$ different values. It follows that either the value of
$a_1$ occurs in $(a_1,\dots,a_g)$ at least $m+1$ times or, else,
another value, different from $a_1$, occurs at least $m$ times. In
any case there are $m$ components $L_{a_i}$ with the same index,
say  $j$. We move these  components $L_j$ next to $L_{a_1}$. It
follows from Lemma~\ref{l_b-metab} that
$[[L,L]_{a_1},L_{a_2},\dots,L_{a_g}]=0$. Thus, we conclude that
$L$ is nilpotent of class at most $g$.

\vskip1ex Now suppose that the derived length of $L$ is at
least~3. By the induction hypothesis, $[L,L]$ is nilpotent of
bounded class. The quotient algebra $L/[[L,L],[L,L]]$ is
metabelian and hence  nilpotent of bounded class. It follows that
$L$ is nilpotent of bounded  nilpotency class  by the analogue of
P.~Hall's theorem (Lemma \ref{chao}).

\vskip1ex


\begin{thebibliography}{29}

\bibitem{khu-ma-shu} {\sc E.\,I.\,Khukhro, N.\,Yu.\,Makarenko,
P.\, Shumyatsky}, Frobenius groups of automorphisms and their
fixed points, {\it Forum Mathematicum}, {\bf 26} (2014),
73--112--12. https://doi.org/10.1515/form.2011.152

\bibitem{shalev} A. Shalev, Automorphisms of finite groups of bounded rank,
\emph{Israel J. Math.} \textbf{82} (1993), 395--404.

\bibitem{kr} V. A. Kreknin, The solubility of Lie
algebras with regular automorphisms of finite period, {\it Dokl.
Akad. Nauk SSSR} {\bf 150} (1963), 467--469 (in Russian); English
transl., {\it Math. USSR Doklady} {\bf 4} (1963), 683--685.

\bibitem{kh-book} E. I. Khukhro, \emph{Nilpotent Groups and their Automorphisms},
de\,\,Gruyter--Verlag, Berlin, 1993.

%\bibitem{chao} Chong-Yun Chao, Some characterizations of
%nilpotent Lie algebras, \emph{Math. Z.} {\bf 103} (1968), 40--42.

\bibitem{hall} P. Hall, Some sufficient conditions for a group to be
nilpotent, \emph{Illinois J. Math.} {\bf 2} (1958), 787--801.

%\bibitem{stewart}  A. G. R. Stewart,
%On the class of certain nilpotent groups, \emph{Proc. Roy. Soc.
%Ser. A} \textbf{292} (1966) 374--379.


\end{thebibliography}
\end{document}